\numberwithin{equation}{section}
\theoremstyle{plain}
\newtheorem{theorem}{Theorem}[section]
\newtheorem{proposition}[theorem]{Proposition}
\newtheorem{lemma}[theorem]{Lemma}
\newtheorem{corollary}[theorem]{Corollary}
\theoremstyle{definition}
\newtheorem{definition}[theorem]{Definition}
\newtheorem{remark}[theorem]{Remark}
\renewcommand{\cite}{\citet*}
\def\^#1{\ifmmode {\mathaccent"705E #1} \else {\accent94 #1} \fi}
\def\~#1{\ifmmode {\mathaccent"707E #1} \else {\accent"7E #1} \fi}
\def\*#1{#1^\ast}
\edef\-#1{\noexpand\ifmmode {\noexpand\bar{#1}} \noexpand\else
\-#1\noexpand\fi}\def\>#1{\vec{#1}}
\def\.#1{\dot{#1}}
\def\atop{\@@atop}
\def\%#1{\mathcal{#1}}
\renewcommand{\leq}{\leqslant}
\renewcommand{\geq}{\geqslant}
\renewcommand{\phi}{\varphi}
\newcommand{\eq}{\eqref}
\newcommand{\dw}{\mathop{d_{\mathrm{W}}}}
\newcommand{\dk}{\mathop{d_{\mathrm{K}}}}
\newcommand{\IE}{\mathbbm{E}}
\newcommand{\law}{\mathscr{L}}
\newcommand{\IR}{\mathbbm{R}}
\def\now{%
\minute=\time%
\hour=\time \divide \hour by 60%
\hourMins=\hour \multiply\hourMins by 60%
\advance\minute by -\hourMins%
\zeroPadTwo{\the\hour}:\zeroPadTwo{\the\minute}%
}
\def\zeroPadTwo#1{\ifnum #1<10 0\fi#1}
\renewcommand\section{\@startsection {section}{1}{\z@}%
{-3.5ex \@plus -1ex \@minus -.2ex}%
{1.3ex \@plus.2ex}%
{\center\small\sc\MakeTextUppercase}}
\def\subsection#1{\@startsection {subsection}{2}{0pt}%
{-3.5ex \@plus -1ex \@minus -.2ex}%
{1ex \@plus.2ex}%
{\bf\mathversion{bold}}{#1}}
\def\subsubsection#1{\@startsection{subsubsection}{3}{0pt}%
{\medskipamount}%
{-10pt}%
{\normalsize\itshape}{\kern-2.2ex. #1.}}
\def\blfootnote{\xdef\@thefnmark{}\@footnotetext}
\def\He{\text{He}}
\begin{document}

\title{\sc\bf\large\MakeUppercase{
 Stein's Method, Many Interacting Worlds and Quantum Mechanics
}
}
\author{\sc Ian W. McKeague, Erol Pek\"oz,
and Yvik Swan}
\date{\it Columbia University, Boston University, and  Universit\'{e} de Li\`{e}ge
}

\maketitle

\begin{abstract}
\cite{Hal2014} recently proposed that quantum theory can be understood as the continuum limit of a deterministic theory in which there is a large, but finite, number of classical ``worlds." A resulting Gaussian limit theorem for particle positions in the ground state, agreeing with quantum theory, was conjectured in \cite{Hal2014} and proven by \cite{McKeague2015} using Stein's method. In this article we propose new connections between Stein's method and Many Interacting Worlds (MIW) theory. In particular, we show that quantum position probability densities for higher energy levels beyond the ground state may arise as distributional fixed points in a new generalization of Stein's method.  These are then used to obtain a rate of distributional convergence for conjectured particle positions in the first energy level above the ground state to the (two-sided) Maxwell distribution; new techniques must be developed for this setting where the usual ``density approach'' Stein solution (see \cite{Chatterjee2011a}) has a singularity.
\end{abstract}

% \noindent\textbf{AMS 2010 subject classifications.} Primary 60F05, 60C05; Secondary 60E10, 60K99. \\
\noindent\textbf{Keywords:} Interacting particle system, Higher
energy levels, Maxwell distribution, Stein's method

\section{Introduction}
 \cite{Hal2014}  proposed a many interacting worlds (MIW) theory for interpreting quantum mechanics in terms of a large but finite number of classical ``worlds."  In the case of the MIW harmonic oscillator, an energy minimization argument was used to derive a recursion giving the location of the oscillating particle as viewed in each of the worlds.  Hall et al.\ conjectured that the empirical distribution of these locations converges to Gaussian as the total number of worlds $N$ increases.  \cite{McKeague2015} recently proved such a result and provided a rate of convergence.
More specifically, McKeague and Levin showed that if $x_1, \ldots x_N$ is a decreasing, zero-mean sequence of real numbers satisfying the recursion relation \begin{equation}\label{e1}x_{n+1}=x_n-\frac{1}{x_1+\cdots +x_n},\end{equation}
then the empirical distribution of the $x_n$ tends to standard Gaussian  when $N\to \infty$.  Here $x_n$ represents the location of the oscillating particle in the $n$th world, and the Gaussian limit distribution agrees with quantum theory for a particle in the lowest energy (ground) state.

The hypothesized correspondence with quantum theory suggests that stable configurations should also exist at higher energies in the MIW theory. Moreover, the empirical distributions of these configurations should converge to distributions with densities of the form
\begin{equation}
  \label{eq:19}
p_k(x) = \frac{({\rm He}_k(x))^2}{k!} \varphi(x), \ \ x\in \mathbb{R},
\end{equation}
where $\varphi(x)$  is the
standard normal density, $$ \He_k(x)=(-1)^k e^{x^2/2}\frac{d^k}{dx^k} e^{-x^2/2}$$ is the (probabilist's) $k$th  Hermite polynomial, and $k$ is a non-negative integer. The ground state discussed above corresponds to $k=0$ and has the standard Gaussian limit.  However, the question of how to characterize  higher energy MIW states corresponding to $k\ge 1$ is still unresolved as far as we know.

 The energy minimization approach of \cite{Hal2014} starts with an analysis of the Hamiltonian for the MIW harmonic oscillator:
 $$H_0({\bf x},{\bf p}) =  E({\bf p}) +V({\bf x}) + U_0({\bf x}), $$
where the locations of  particles (having unit mass) in the $N$ worlds are specified by  ${\bf x} =(x_1,\ldots , x_N)$  with $x_1>x_2>\ldots > x_N$, and their momenta by   ${\bf p} =(p_1,\ldots,p_N)$.
Here  $E({\bf p}) = \sum_{n=1}^N p_n^2/2$ is the  kinetic energy,
$V({\bf x}) = \sum_{n=1}^N x_n^2$
is the  potential energy (for the parabolic trap), and
$$U_0({\bf x}) = \sum_{n=1}^N\left({1\over x_{n+1}-x_n} -{1\over x_{n}-x_{n-1}}\right)^2$$
is  called the ``interworld" potential, where
$x_0=\infty$ and $x_{N+1}=-\infty$.
In the ground state, there is no movement because all the momenta $p_n$ have to vanish for the total energy to be minimized. In this case, as mentioned above, \cite{Hal2014} showed that the particle locations $x_n$ satisfy (\ref{e1})
and \cite{McKeague2015} showed that the empirical distribution tends to a standard Gaussian distribution.

%However, when the total energy is greater than in the ground state, the particles will move in a complex manner governed by Hamilton's equations, and a study of the time-dependent behavior of the  system would require  numerically solving these equations.  ODE solvers that preserve the total energy of a Hamiltonian system (known as symplectic integrators) are readily available, but the actual solutions are analytically intractable. Therefore, the problem of isolating stationary solutions  of the MIW  harmonic oscillator to serve as parallels to  higher-energy states (eigenstates) of the quantum harmonic oscillator appears to be analytically  intractable.
%
Our contribution in the present article is to derive an interworld potential for the second  energy state ($k=1$) and show that the empirical distribution of the configuration that minimizes the corresponding Hamiltonian has a limit distribution that again agrees with quantum theory.
The  interworld  potential in this case is shown to be
\begin{equation}
\label{U1}
U_1({\bf x}) = 9\sum_{n=1}^N\left({1\over x_{n+1}^3-x_n^3} -{1\over x_{n}^3-x_{n-1}^3}\right)^2x_n^4
\end{equation}
and the  minimizer of the corresponding Hamiltonian
 ${H_1}({\bf x},{\bf p}) =  E({\bf p}) +V({\bf x}) + U_1({\bf x})$
 is shown to satisfy the recursion
\begin{equation}
\label{rec-max0}
x_{n+1}^3={x_n^3}-3\left(\sum_{i=1}^n \frac{1}{x_i}\right)^{-1}.
%x_{n+1}^3={x_n^3}-\frac{C_N}{x_1^{-1}+\cdots + x_n^{-1}} %\frac{1}{x_1}+\cdots +\frac{1}{x_n}}
\end{equation}
%\begin{equation}
%\label{rec-max0}
%x_{n+1}=\Big({x_n^3}-\frac{3N}{N-1}\left( \frac{1}{x_1}+\cdots +\frac{1}{x_n}\right)^{-1}\Big)^{1/3}
%\end{equation}
Further, we show  that if $x_1, \ldots, x_N$ is a decreasing, zero-mean solution, then  the empirical distribution of the $x_n$  converges to the (two-sided) Maxwell distribution having density $p_1(x)=x^2 e^{-x^2/2}/\sqrt{2\pi}$.   The entire sequence $x_1, \ldots x_N$ should be viewed as indexed by $N$, though we suppress notation for this dependence and write $x_1, \ldots x_N$ instead of $x_{1,N}, \ldots , x_{N,N}$.  We also give a rate of convergence using a new extension of Stein's method.   Our approach is  generalizable to  recursions that converge to the distributions of other higher energy states of the quantum harmonic oscillator, although we do not pursue such extensions here.

We initially thought that the MIW interpretation could be based on a ``universal"  interworld potential function $U_0$ that applies to all energy levels, with the densities $p_k(x)$  then arising as limits of {\it  local} minima of $H_0$.  However, this idea turned out to be analytically unworkable. Here we propose an alternative  approach   in terms of adapting the interworld potential to each higher energy level.  Minimizing the resulting Hamiltonian is then tractable and the solution can be shown to converge to  $p_k(x)$, at least in the case $k=1$.  %One could also argue that the resulting functional form of $U_1(\bf x)$ would be more elegant if it only depended on the sequential differences $x_{n+1}-x_{n}$, corresponding to the distances between particles in adjacent worlds, though we did not see this type of function arising. 
\cite{Hal2014} derived their interworld potential $U_0$ as a discretization of  Bohm's quantum potential  summed over the particle ensemble, see  \cite{Bohm}.  The challenge  in general is to extend this derivation to higher-energy  wave functions in a way that leads to an explicit recursion  minimizing the resulting  Hamiltonian, and to show that it agrees with $p_k(x)$ in the limit.   A major contribution here, in addition to providing a rate of convergence, is a general method for finding such interworld potential functions and their associated particle  recursions.

Stein's method (see \cite{Stein1986}, \cite{che2010} and \cite{Ross2011}) is a well established technique for obtaining explicit error bounds for distributional limit theorems.  The usual ``density approach'' (see \cite{Chatterjee2011a}) for applying Stein's method to arbitrary random variables does not seem to apply in cases where the density function vanishes at a point that is not at the endpoints of the range of the random variable (here we have $p_1(0)=0$ and the random variable can take both negative and positive values); in this case the solution to the Stein equation will have a singularity and also unbounded derivatives, and this therefore requires the new technique we give here to handle such distributions. It is interesting to note that, while there are plenty of examples of Stein's method applied to distributions with a density having a zero at the endpoints of the range of the random variable (the gamma and beta distributions, for example), there have been no examples that we know of where there are zeros inside the range; the higher energy distributions $p_k(x)$, for $k>0$, appear to be the first such distributions considered.  The price one has to pay with our approach for handling these zeros is that more complicated estimates must be made from the couplings. In this case we have an explicit representation of the recursion, and therefore the coupling, and this leads to the possibility of making these estimates.

In Section 2 we generalize the argument of \cite{Hal2014} to derive the interworld potential, and show how it leads to the solution  (\ref{rec-max0}). In Section 3 we introduce the notion of a generalized zero-bias transformation, and show that the distributional properties of eigenstates of the quantum harmonic oscillator can be characterized in terms of fixed points of this transformation.  Also, we derive the generalized zero-bias distribution for the empirical distribution of  general configurations.  Section \ref{stein} develops our results based on the new extension of Stein's method to show convergence of the  configuration that minimizes the Hamiltonian of  the second energy state.

\section{Interworld potentials for higher energy states}
\label{canon}
 \cite{Hal2014}  introduced their MIW theory from the perspective of the de Broglie--Bohm interpretation of quantum mechanics, which is mathematically equivalent to standard quantum theory.  They used this approach to construct an ansatz for the conjectured interworld potential $U_0$  governing the ground state wave function of the quantum harmonic oscillator.  In this section we introduce an extended version of this ansatz aimed at providing a MIW characterization of the  higher energy eigenstates.

 Our argument follows along the lines of Section IIIA of \cite{Hal2014} with the major difference being that we now need to  introduce a more general way of approximating  the density of  particle location for a  stationary wave function $\psi(x)$, namely for a density of the form $p(x) = |\psi(x)|^2= b(x)\varphi(x)$, where $b(x)$ is  a non-negative, even, smooth function having finitely many zeros.  Here $b$ represents a  ``baseline" that varies more rapidly than $\varphi(x)$. 
Let  $x_1>x_2>\ldots > x_N$.   Bohm's quantum potential summed over the  ensemble $\{ x_n\}$ is defined by 
\begin{equation}
U_{\psi}({\bf x})= \sum_{n=1}^N \left[ {p'(x_n)/p(x_n)}\right]^2 \label{qpot}
\end{equation}
where we are using dimensionless units.
 An approximation to $p(x_n)$  based on ignoring $\varphi(x)$ is given (up to a normalizing constant) by
 $$\tilde p(x_n) = {b(x_n)\over B(x_n) -B(x_{n+1})},$$
where $B(x)=\int_0^x b(t)\, dt$ is the cumulative baseline function. This suggests \begin{eqnarray*}
{p'(x_n)\over p(x_n)}&\thickapprox& {\tilde p(x_{n})-\tilde p(x_{n-1})\over (x_{n}-x_{n-1})\tilde p(x_n)}\thickapprox \left[ {1\over B(x_n) -B(x_{n+1})} -{1\over B(x_{n-1})- B(x_{n})}\right] b(x_n),
% \\&\thickapprox&\left[ {1\over B(x_n) -B(x_{n+1})} -{1\over B(x_{n-1})- B(x_{n})}\right] {B(x_n)\over x_n}
\end{eqnarray*}
 where we set $B(x_0)=\infty$ and $B(x_{N+1})=-\infty$.
Our proposed ansatz for the interworld potential is then based on inserting the above expression into  (\ref{qpot}) to obtain %$$  \sum_{n=1}^N \left[ {p'(x_n)\over p(x_n)}\right]^2. $$%up to a normalizing constant by
  \begin{equation}
\label{potential}  U_b({\bf x}) = \sum_{n=1}^N\left[ {1\over B(x_{n+1})- B(x_n)} -{1\over B(x_{n})- B(x_{n-1})}\right]^2 b(x_n)^2.
\end{equation}
%where  $C_b= \int x^2p(x)\, dx$  represents mean potential energy.
Note that our earlier assumptions about $b$ imply that $B$ is strictly increasing, so $U_b$ is well-defined.
%In general, however, the ansatz is naturally extended by using
%the total variation $\tilde B(x)$ of $B$ over $[0,x]$ in place of $B(x)$, and only relying on the assumption that the (non-decreasing) function $\tilde B$ is  strictly increasing.
%We discuss this extension later in the section, but
In the simplest cases  $b(x)=1$ and  $b(x)=x^2$ the above expression for $U_b$ agrees with the interworld potentials $U_0$ and $U_1$  defined in the Introduction.
\medskip

 Specializing to the case $b(x)=x^2$, the following argument characterizes the minimizer of the Hamiltonian $H_1$ (i.e., the ground state when the  interworld potential  is $U_1$) in terms of a solution to the recursion (\ref{rec-max0}).  In any ground state the particles do not move, so the kinetic energy  $E$ vanishes.  Then, adapting the argument of  \cite{Hal2014} to apply to $H_1$, we have
\begin{eqnarray*}
9(N-1)^2
&=& 9\left[ \sum_{n=1}^{N-1} {  x_{n+1}^3-x_n^3 \over x_{n+1}^3-x_n^3}\right]^2 \\
&=& 9\left[  \sum_{n=1}^{N} \left(  {1\over  x_{n+1}^3-x_n^3}- {1\over  x_{n}^3-x_{n-1}^3} \right) x_n^2(x_n-\overline{x^3_N}/x_n^2)  \right]^2\\
&\le&  9\left[\sum_{n=1}^N\left({1\over x_{n+1}^3-x_n^3} -{1\over x_{n}^3-x_{n-1}^3}\right)^2x_n^4\right]\left[ \sum_{n=1}^{N} (x_n-\overline{x^3_N}/x_n^2)^2\right]\\
& \le& U_1({\bf x})\, V({\bf x}), \end{eqnarray*}
where the first inequality is Cauchy--Schwarz.
So $U_1\ge 9(N-1)^2/V$, leading to
$$H_1=U_1+V \ge 9(N-1)^2/V +V \ge 6(N-1)$$
with the last inequality being equality for $V= 3(N-1)$.  It follows that $H_1$ is minimized when $V= 3(N-1)$, the mean $\overline{x^3_N}$ of  $\{x_n^3, n=1,\ldots, N\}$ vanishes, and $${1\over x_n}= \alpha \left[{1 \over x_{n+1}^3-x_n^3} -{1\over x_{n}^3-x_{n-1}^3}\right] $$
for some constant $\alpha$.
The sum of the right of the above display telescopes, leading to the recursion (\ref{rec-max0})
by rearranging and noting that $\alpha =-V/(N-1) =-3$.

 \begin{figure}[!ht]
\begin{center}
  \includegraphics[scale=.43]{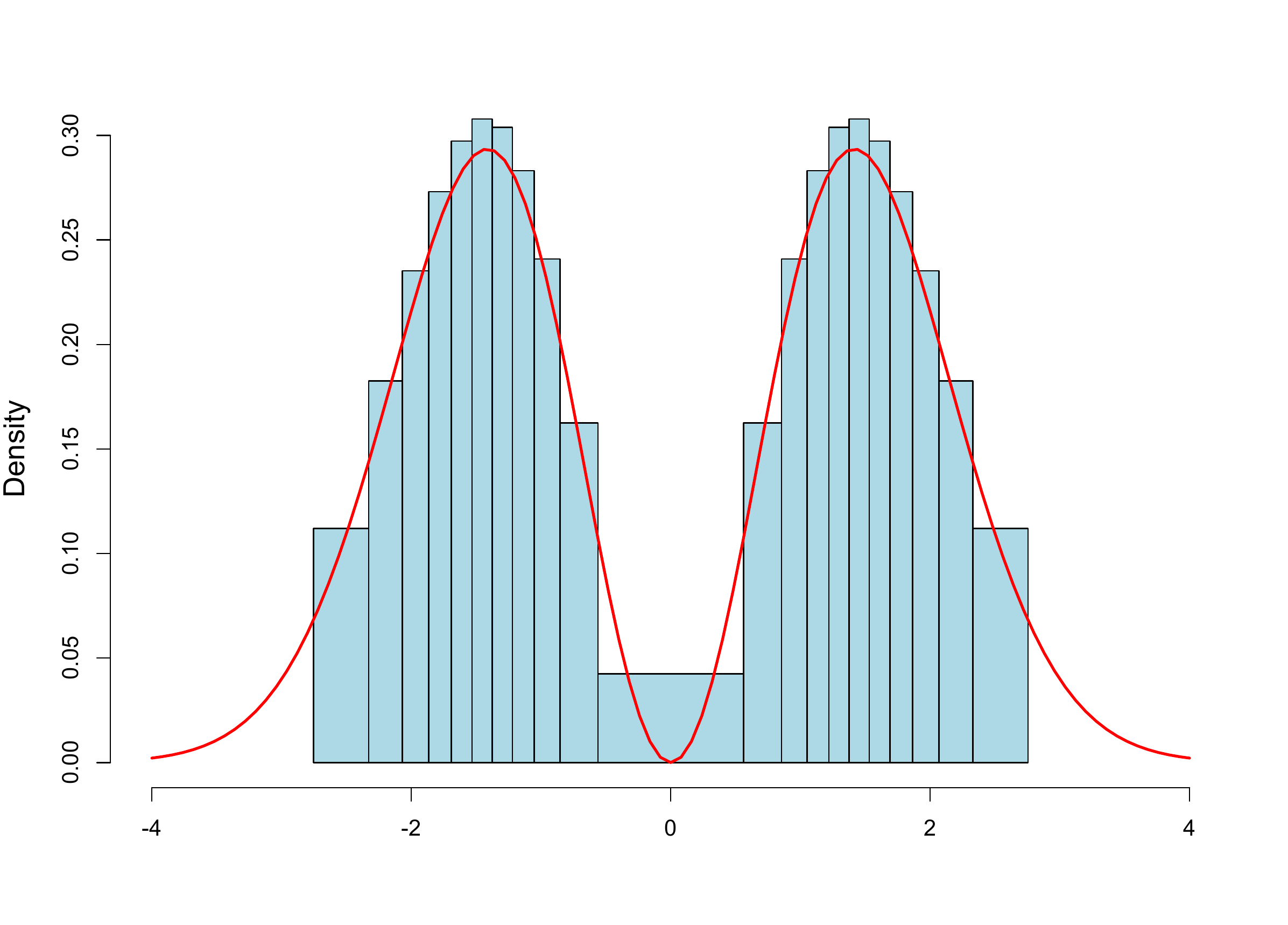}
 \caption{Example with $b(x)=x^2$, $N=22$, showing the piecewise constant density having mass $1/(N-1) $ uniformly distributed over the intervals between successive $x_n$ compared with the Maxwell density, where the breaks in the histogram are the successive $x_n$ satisfying the recursion  (\ref{rec-max0}).}% The $x$-axis is indicated by the dotted line. }
   \label{hist1}
 \end{center}
   \end{figure}
 \medskip

The following lemma provides the basic properties we need to ensure the existence of a solution of the Maxwell recursion  (\ref{rec-max0}) that minimizes the Hamiltonian $H_1$, as well as ensuring that the solution is unique. This result is analogous to Lemma 1 of \cite{McKeague2015} concerning solutions of (\ref{e1}), but  the difference here  is that the  variance  is 3, agreeing with the Maxwell distribution (rather than close to standard normal  in the case of  (\ref{e1})).
\begin{lemma}\label{lem1} Suppose $N$ is even.
Every zero-median solution $x_1,\ldots, x_N$ of  (\ref{rec-max0}) satisfies:
\begin{itemize}
\item[]
\begin{itemize}
\item[{\rm(P1)}] Zero-mean: $x_1+\ldots +x_N=0$.
\item[{\rm (P2)}] Maxwell variance:   $x_1^2+\ldots +x_N^2=3(N-1)$.
\item[{\rm(P3)}]  Symmetry:  $x_n=-x_{N+1-n}$ for $n=1,\ldots, N$.
\end{itemize}
\end{itemize}
Further, there exists a unique solution $x_1,\ldots, x_N$ such that (P1) and
\begin{itemize}
\item[]
\begin{itemize}
\item[{\rm(P4)}]  Strictly decreasing:  $x_1>\ldots >x_N$
\end{itemize}
\end{itemize}
hold.  This solution has the zero-median property, and thus also satisfies (P2) and (P3).
\end{lemma}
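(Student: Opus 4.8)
We prove Lemma~\ref{lem1} in three stages. Throughout put $S_n:=\sum_{i=1}^n 1/x_i$, so that \eq{rec-max0} reads $x_{n+1}^3=x_n^3-3/S_n$ and $S_n-S_{n-1}=1/x_n$, and recall that calling $x_1,\dots,x_N$ a ``solution'' entails $x_n\neq0$ for all $n$ and $S_n\neq0$ for $n\leq N-1$. First, a conserved quantity: from $x_n^3S_n=x_n^3S_{n-1}+x_n^2$ and, by the recursion at index $n-1$, $x_n^3S_{n-1}=x_{n-1}^3S_{n-1}-3$, we get $x_n^3S_n-x_{n-1}^3S_{n-1}=x_n^2-3$; summing from $n=2$ to $N$ and using $x_1^3S_1=x_1^2$ gives
\ben{\label{lem1tele}x_N^3\sum_{i=1}^N\frac1{x_i}=\sum_{n=1}^N x_n^2-3(N-1).}
So once the symmetry (P3) is known, pairing $x_n$ with $x_{N+1-n}$ yields (P1), pairing $1/x_n$ with $1/x_{N+1-n}$ annihilates the left side of \eq{lem1tele}, and (P2) follows. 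The lemma thus reduces to (i) deriving (P3) from a zero median, and (ii) the existence and uniqueness of a strictly decreasing solution satisfying (P1).

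For (i), set $m=N/2$; for even $N$ the zero-median condition is the single equation $x_{m+1}=-x_m$. I claim that for every $j\in\{0,1,\dots,m-1\}$ one has $x_{m+1+j}=-x_{m-j}$ and $S_{m+j}=S_{m-j}$. The case $j=0$ is the hypothesis (the $S$-identity being vacuous). Assuming the claim at level $j\leq m-2$, the identity $S_{m+j+1}-S_{m-j-1}=(S_{m+j}-S_{m-j})+\frac1{x_{m+j+1}}+\frac1{x_{m-j}}=0$ gives the $S$-statement at level $j+1$, whereupon the recursion taken forward at index $m+j+1$ and rearranged at index $m-j-1$, combined with the induction hypothesis, yields
\[x_{m+j+2}^3=x_{m+j+1}^3-\frac3{S_{m+j+1}}=-x_{m-j}^3-\frac3{S_{m-j-1}}=-x_{m-j-1}^3 .\]
Taking $j=m-1$ gives $x_n=-x_{N+1-n}$ for all $n$, i.e.\ (P3). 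This is precisely where evenness of $N$ enters: for odd $N$ a symmetric solution would force a vanishing middle term, on which \eq{rec-max0} is undefined.

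For (ii), note that \eq{rec-max0} determines $x_{n+1}$ from $x_1,\dots,x_n$ through a real cube root, so a solution is fixed by $x_1$; since $x_n\mapsto-x_n$ preserves \eq{rec-max0} we may take $x_1=t>0$. As $x_{n+1}^3-x_n^3=-3/S_n$, the generated sequence obeys $x_1(t)>\dots>x_{m+1}(t)$ exactly on the open set $D=\{t>0:S_1(t),\dots,S_m(t)>0\}$, on which $h(t):=x_m(t)+x_{m+1}(t)$ is continuous. An elementary asymptotic analysis---$x_n(t)^3\sim-3(n-1)t$ for $n\geq2$ as $t\to0^+$, and $x_n(t)\sim t$ with $S_n(t)\sim n/t$ as $t\to\infty$---shows that within a suitable connected component of $D$ the function $h$ is negative near one end and positive near the other, so the intermediate value theorem furnishes $t^\ast\in D$ with $h(t^\ast)=0$. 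By the previous stage the corresponding solution is symmetric, hence strictly decreasing on all of $\{1,\dots,N\}$, so it satisfies (P1)--(P4) and the zero-median property. For uniqueness among solutions with (P1) and (P4) I plan to follow the argument of Lemma~1 in \cite{McKeague2015}, showing by a monotonicity analysis of this one-parameter family that at most one value of $t$ produces a decreasing zero-mean solution.

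Stage (i) and the conserved-quantity identity are short algebra; the real work is in (ii): locating the relevant connected component of $D$ and controlling the signs of $h$ at its boundary, and---above all---the monotonicity estimate giving uniqueness. The cubic nonlinearity is what makes this harder than for the linear recursion \eq{e1}: differentiating \eq{rec-max0} in $t$, the sensitivities $\partial_t x_n$ need not keep a fixed sign (already $\partial_t x_2=(t^2-1)/x_2^2$ changes sign at $t=1$), so the monotonicity argument must be confined to the range of $t$ actually relevant to \eq{rec-max0}; carrying this through is where I expect the bulk of the effort to lie.
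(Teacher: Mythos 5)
Your derivation of (P2) from (P1) and (P3) is essentially the same as the paper's: you sum the one-step identity $x_n^3S_n-x_{n-1}^3S_{n-1}=x_n^2-3$ to get the conserved quantity $x_N^3S_N=\sum_{n=1}^N x_n^2-3(N-1)$, whereas the paper telescopes $\sum_{n=1}^{N-1}S_n(x_n^3-x_{n+1}^3)$ to reach $3(N-1)=\sum_{n=1}^{N-1}x_n^2-S_{N-1}x_N^3$; the two identities are the same after substituting $S_{N-1}=S_N-1/x_N$, and both then invoke $S_N=0$ from (P3). Your stage (i) --- the induction proving $x_{m+1+j}=-x_{m-j}$ and $S_{m+j}=S_{m-j}$ --- is correct and actually more explicit than the paper, which simply states that the symmetry step ``follows identical steps to the proof of Lemma~1 of McKeague--Levin'' and leaves it there. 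So on (P2) and (P3) you match or exceed the paper.

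Where the proposal falls short of a proof is stage (ii). The paper also outsources existence and uniqueness to McKeague--Levin with no further comment, but you do not actually carry the transfer through: you sketch an IVT argument for existence (plausible, but the location of the relevant connected component of $D$ and the boundary signs of $h$ are asserted, not verified) and you explicitly defer uniqueness (``I plan to follow the argument \ldots''), while flagging a real concern --- that $\partial_t x_2=(t^2-1)/x_2^2$ changes sign at $t=1$, so the sensitivities $\partial_t x_n$ need not be monotone in sign as they are for the linear recursion \eq{e1}. That observation is correct and is exactly the point where the cubic recursion departs from McKeague--Levin's setting, so invoking their Lemma~1 is not a free pass: one has to restrict to the $t$-range where the generated sequence is admissible and show that the sign issue does not arise there, or find a different monotone functional of $t$. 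Until that is done, stage (ii) is an outline rather than an argument, and the lemma as a whole is unproven. In short: stages (i) and (iii) are fine and faithful to the paper's route, but the existence/uniqueness part --- which the paper waves away and you rightly identify as the hard step --- is missing.
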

\begin{proof}
The proof follows identical steps to the proof of  Lemma 1 of \cite{McKeague2015}, apart from the variance property (P2), which is proved using (P1) and (P3) as follows.  Denote $S_n= \sum_{i=1}^n x_i^{-1}$ for $n=1,\ldots, N$, and set $S_0=0$.  Using  (\ref{rec-max0}) we can write
\begin{eqnarray*}
3(N-1)&=&3 \overset{N-1}{\underset{n=1}{\sum }}S_nS_n^{-1}=\overset{N-1}{\underset{n=1}{\sum
}}S_n(x_n^3-x_{n+1}^3)=\overset{N-1}{\underset{n=1}{\sum }}[(S_{n-1}+x_n^{-1})x_n^3-S_nx_{n+1}^3]\\
&=&\overset{N-1}{\underset{n=1}{\sum
}}[S_{n-1}x_n^3-S_nx_{n+1}^3+x_n^2]\\
&=& x_1^2+\ldots
+x_{N-1}^2-S_{N-1}x_N^3,
\end{eqnarray*}
where we used the recursion in the second equality, and the last equality is from a telescoping sum.   (P3) implies $S_N=0$, so $-S_{N-1}=1/x_N$, and (P2) follows.
\end{proof}
\medskip

Although  in the sequel we concentrate on the case $k=1$ (see Figure \ref{hist1}), to conclude this section we briefly discuss general densities of the form $p_k$ given in  (\ref{eq:19}).  % As noted earlier, the ansatz for the  interworld potential $U_b$ is  defined in general  by  replacing $B$ in (\ref{potential}) by its total variation
%$$\tilde B(x) = \int_0^x |B'(t)| \, dt =\int_0^x|b(t)+tb'(t)|\, dt$$
%which is assumed to be strictly increasing.
%An extension of the argument we used above shows that, even for general baseline $b$, the Hamiltonian based on $U_b$ is minimized by
The above argument for $b(x)=x^2$ can be extended to general $U_b$ under the condition that   $B(x)$ is proportional to $xb(x)$, which is the case when $b(x)$ is proportional to $x^r$ for some even non-negative integer $r$ (but not for the square of the  $k$th Hermite polynomial  unless $k=0$ or 1).  Under this condition, it can be shown that the minimizer of the Hamiltonian based on $U_b$ is a symmetric solution of the recursion
\begin{equation}
\label{genrec}
 B(x_{n+1})=  B(x_{n})- \left(\sum_{i=1}^n \frac{x_i}{b(x_i)}\right)^{-1}.
\end{equation}
%where $$ \beta_N ={1\over N-1}\sum_{n=1}^N{x_nB(x_n)\over b(x_n)}.$$
%If  $b(x)$ is proportional to the square of the  $k$th Hermite polynomial %, then $C_b=2k+1$.
We have not been able to show that this recursion minimizes the Hamiltonian for general $b$, but our numerical results suggest that it is very close if not identical to a minimizer.  With  $k=2$ we have $b(x)=(x^2-1)^2/2$, $B(x) = x^5/10-x^3/3+x/2$, and
%, and it is easily checked that  $\tilde B(x) = B(x)$ for $|x|\le 1/\sqrt 5$, ${\rm sgn}(x) a + B(x)$ for $|x|\ge 1$, where $ a=16/(25\sqrt 5)$, and ${\rm sgn}(x)(a-B(x))$ otherwise.
the symmetric solution of the resulting recursion
produces a remarkably good agreement with $p_k$, see Figure \ref{Herm2}.

 \begin{figure}[!ht]
\begin{center}
  \includegraphics[scale=.52]{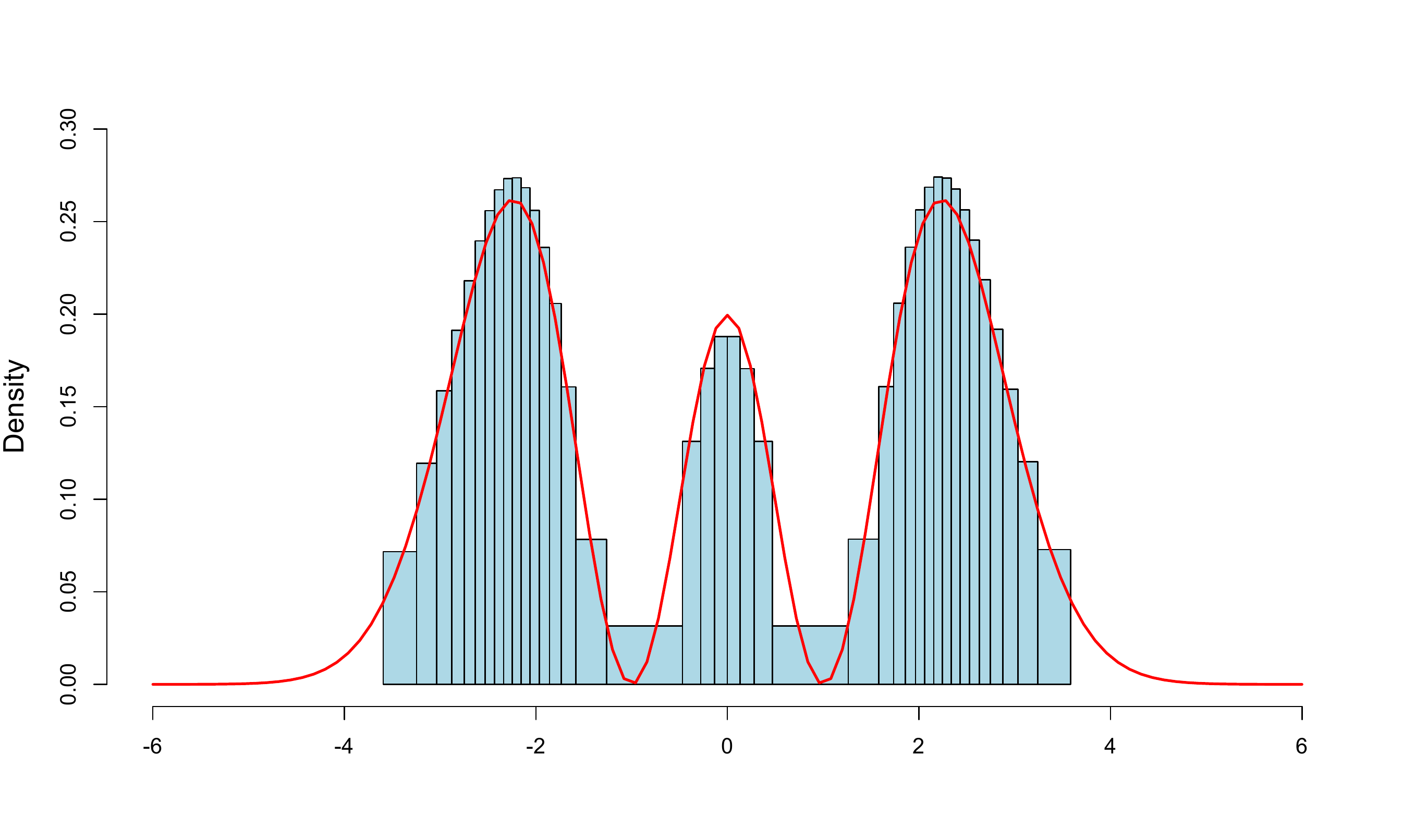}
\caption{Example with $b(x)={\rm He}_k(x)^2/k!$ for $k=2$, $N=41$,  where the breaks in the histogram are the successive $x_n$ satisfying the recursion  (\ref{genrec}) and the red curve is $p_k(x)$.}
\label{Herm2}
\end{center}
 \end{figure}
 \medskip

\section{Generalized zero-bias transformations}
\label{gzbds}
Let $W$ be a symmetric random variable and
$b\colon \mathbb{R} \to \mathbb{R} $ a non-negative function such that
$\sigma^2=\IE[W^2/b(W)]<\infty$. \cite{Goldstein1997} gives a distributional fixed point characterization of the Gaussian distribution, which we generalize in the definition below.
\begin{definition} \label{d1}   If there is a random variable\ $W^{\star}$ such that $$\sigma^2  \IE\left[\frac{f'(W^{\star})}{b(W^{\star})}\right]=\IE\left[\frac{Wf(W)}{b(W)}\right]$$
for all  absolutely continuous functions $f\colon \mathbb{R}\to
\mathbb{R}$ such that $\IE|Wf(W)/b(W)|<\infty$, we say that $W^{\star}$ has
the {\it $b$-generalized-zero-bias distribution} of $W$.
\end{definition}
\begin{remark}
\cite{Goldstein1997} study the case $b(x)=1$ and show that $W^{\star}$ has the same distribution as $W$ if and only if $W$ has a Gaussian distribution.  Distributional fixed point characterizations for exponential, gamma and other nonnegative distributions and the connection with Stein's method have been studied in \cite{Pekoz2011}, \cite{Pekoz2013}, and \cite{Pekoz2015}.
\end{remark}

\begin{remark}\label{remark3.3}
  By a routine extension of the proof of Proposition 2.1 of
  \cite{che2010}, it can be shown that there exists a unique
  distribution for $W^{\star}$, and it is absolutely continuous with
  density
  $$p^{\star}(x)\propto {b(x)}\IE\left[\frac{W}{b(W)}1_{W\geq
      x}\right].$$
  We note in passing that the $\sigma^2$ is misplaced in the first
  display of Chen et al.'s proposition, which corresponds to $b(x)=1$,
  the usual zero-bias distribution of $W$.  The composition of the
  $b$-generalized-zero-bias transformation with the
  $(1/b)$-generalized-zero-bias transformation is the usual zero-bias
  transformation.
\end{remark}

\begin{remark}
With $\varphi$ the standard normal density and $b$ a $\varphi$-integrable function,
if $W$ has density
 \begin{equation}\label{eq:13}
   p(x)=b(x) \varphi(x),
 \end{equation}
 then its distribution is a fixed point for the
 $b$-generalized-zero-bias
 transformation since
 %$$\sigma^2=\int_{-\infty}^\infty \frac{t^2}{b(t)}p(t)\, dt
 %=\int_{-\infty}^\infty t^2\varphi(t)\, dt =1$$ and
  $$p^{\star}(x)=b(x)\int_x^\infty \frac{t}{b(t)}p(t)\, dt=
  b(x)\int_x^\infty t\varphi(t)\, dt=p(x). $$
\end{remark}

The following  result gives the $b$-generalized-zero-bias distribution of the uniform distribution on $N$ points.

   \begin{proposition} \label{prop:gener-zero-bias}

Given an integer $N>1$, let $x_1>x_2>\ldots > x_N$  be such that $b(x_n)>0$ for all $n$.
Let $\mathbb{P}_N$ be  the  empirical distribution of the $x_n$: $$\mathbb{P}_N(A)={\#\{n\colon x_n\in A\}\over N}$$  %{1\over N} \sum_{n=1}^N \delta_{x_n},$$ %of $x_1,\ldots, x_N$,
for any Borel set $A\subset \mathbb{R}$. Under the symmetry condition $x_n=x_{N-n+1}$ for  $n=1,\ldots, N$,  the $b$-generalized-zero-bias distribution ${\mathbb{P}}^{\star}_N$ of $\mathbb{P}_N$ is defined, and has density
$$p^{\star}(x)\propto  b(x)\left[\sum_{i=1}^n \frac{x_i}{b(x_i)}\right] $$     %\left[\sum_{j=1}^N \frac{x_j^2}{b(x_j)}\right]^{-1}$$
for  $x_{n+1}<  x \le x_{n}$ ($n=1,\ldots, N-1$), and $p^{\star}(x) = 0$ if $x> x_1$ or  $x\le x_N$.  \end{proposition}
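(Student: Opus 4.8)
The plan is to read the formula off from the general description of the $b$-generalized-zero-bias density given in Remark \ref{remark3.3}, specialized to a random variable $W$ with law $\mathbb{P}_N$. First I would check that Remark \ref{remark3.3} applies to this $W$. Since $W$ takes only the finitely many values $x_1,\ldots,x_N$ and $b(x_n)>0$ for every $n$, the number $\sigma^2=\mathbb{E}[W^2/b(W)]=\tfrac1N\sum_{n=1}^N x_n^2/b(x_n)$ is finite, and it is strictly positive because the $x_n$ are distinct and $N>1$, so at least one of them is nonzero. The symmetry hypothesis $x_n=-x_{N+1-n}$ says exactly that $W$ is symmetric, and combined with the evenness of $b$ it moreover yields $\mathbb{E}[W/b(W)]=\tfrac1N\sum_{n=1}^N x_n/b(x_n)=0$, since the $n$-th and $(N+1-n)$-th summands cancel (the middle summand, present only when $N$ is odd, being $0$ as $x_{(N+1)/2}=0$). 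Hence $\mathbb{P}_N^{\star}$ is defined, is absolutely continuous, and has density $p^{\star}(x)\propto b(x)\,\mathbb{E}[\tfrac{W}{b(W)}1_{W\geq x}]$.

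Next I would evaluate this expectation explicitly. As $\mathbb{P}_N$ is the uniform law on $\{x_1,\ldots,x_N\}$,
\be{\mathbb{E}\Bigl[\tfrac{W}{b(W)}1_{W\geq x}\Bigr]=\frac1N\sum_{i\,:\,x_i\geq x}\frac{x_i}{b(x_i)},}
and since $x_1>\cdots>x_N$ the index set $\{i:x_i\geq x\}$ is empty when $x>x_1$, equals $\{1,\ldots,n\}$ when $x_{n+1}<x\leq x_n$ (with $n\in\{1,\ldots,N-1\}$), and equals $\{1,\ldots,N\}$ when $x\leq x_N$; in this last case the sum equals $\tfrac1N\sum_{i=1}^N x_i/b(x_i)=0$ by the cancellation observed above. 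This gives $p^{\star}(x)\propto b(x)\sum_{i=1}^n x_i/b(x_i)$ on $(x_{n+1},x_n]$ and $p^{\star}(x)=0$ for $x>x_1$ or $x\leq x_N$, which is exactly the claimed formula.

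I do not foresee a serious obstacle: granted Remark \ref{remark3.3}, the argument is little more than keeping track of which indices survive the truncation on each interval. The one point I would take care over is the role of the symmetry hypothesis. It enters in two places: it makes the truncated sum vanish for $x\leq x_N$ (used above), and it is what guarantees $p^{\star}\geq 0$ --- although Remark \ref{remark3.3} already asserts that $p^{\star}$ is a density, it is worth recording directly that, for a symmetric $W$ and even $b$, $\mathbb{E}[\tfrac{W}{b(W)}1_{W\geq x}]\geq 0$ for every $x$: if $x\geq 0$ the event $\{W\geq x\}$ forces $W\geq 0$, hence $W/b(W)\geq 0$; if $x<0$ then $\mathbb{E}[\tfrac{W}{b(W)}1_{W\geq x}]=\mathbb{E}[\tfrac{W}{b(W)}]-\mathbb{E}[\tfrac{W}{b(W)}1_{W<x}]=-\mathbb{E}[\tfrac{W}{b(W)}1_{W<x}]\geq 0$ because $W/b(W)\leq 0$ on $\{W<x\}\subseteq\{W<0\}$. (Integrability of $p^{\star}$ is not an issue, as its support is contained in the bounded interval $(x_N,x_1]$, on which $b$ is bounded.)
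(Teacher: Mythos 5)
Your proof is correct and follows the same approach as the paper, whose entire proof is the single line ``Immediate from Remark~\ref{remark3.3}.'' You have simply made explicit the index bookkeeping on each interval $(x_{n+1},x_n]$, the vanishing for $x\le x_N$ by the symmetric cancellation, and the nonnegativity check, all of which the paper leaves to the reader.
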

\begin{proof}
  Immediate from Remark \ref{remark3.3}.  \end{proof}

Recall the following distances between distribution functions  $F$ and $G$.  The Kolmogorov distance is
$$\dk(F,G)=\sup_{x\in \IR}|F(x)-G(x)|,$$
and the  Wasserstein distance  is
$$\dw(F,G)=\sup_{h\in \mathcal{H}}\left|\int_\IR h\, dF-\int_\IR h\, dG\right|$$
where $$\mathcal{H}=\{h\colon \IR\rightarrow \IR \mbox{ Lipschitz with } \| h'\|\leq 1\}$$
and $\|\cdot \|$ is the supremum norm.  Using Proposition 1.2 in \cite{Ross2011}, these two metrics are seen to be related by
$$\dk(F,G)\leq \sqrt{2C\dw(F,G)}$$ if $G$ has density bounded by $C$.

Restricting attention to the  special case $b(x)=x^2$, we can now state our main result, along with an important  corollary.
\begin{theorem}\label{maintheorem}
Suppose $W^{\star}$ is constructed on the same probability space as the zero-mean random variable $W$ and is distributed according to the $x^2$-generalized-zero-bias distribution of $W$.  Let $M$ have the two-sided Maxwell density $x^2e^{-x^2/2}/\sqrt{2\pi}$.  Then there exist positive
finite constants $\lambda_1, \lambda_2, \lambda_3$ and $\lambda_4$ such that
  \begin{align}
     \dw(\law(W),\law(M)) & \le
     \lambda_1 \mathbb{E} \left| W - W^{\star} \right| + \lambda_2
     \mathbb{E}\left[ |W| \left| W - W^{\star} \right|
     \right]\nonumber \\
&\label{eq:6} \qquad\quad + \lambda_3 \mathbb{E} \left|
       \frac{1}{W}-\frac{1}{W^{\star}} \right| + \lambda_4 \mathbb{E} \left| 1-\frac{W^{\star}}{W} \right|.
  \end{align}
\end{theorem}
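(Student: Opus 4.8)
The plan is to run Stein's method with the $x^2$-generalized zero-bias transformation of Definition~\ref{d1}. Its fixed point is exactly the two-sided Maxwell law $\law(M)$ (the density $x^2e^{-x^2/2}/\sqrt{2\pi}$ is of the form $b(x)\varphi(x)$ with $b(x)=x^2$; see the remarks following Definition~\ref{d1}). Fix a test function $h$ with $\|h'\|\le1$; we may assume $h(0)=0$ and that the right side of~\eqref{eq:6} is finite, and put $\tilde h=h-\IE h(M)$, so $\IE\tilde h(M)=0$. The relevant Stein operator is $\mathcal A f(x)=f'(x)/x^2-f(x)/x$, which satisfies $\IE[\mathcal A f(M)]=0$ by the fixed-point property; the Stein equation $\mathcal A f=\tilde h$, after multiplying by $x^2$, is the linear ODE $f'(x)-xf(x)=x^2\tilde h(x)$, whose unique polynomially bounded solution is
$$f(x)=-e^{x^2/2}\int_x^\infty t^2\tilde h(t)\,e^{-t^2/2}\,dt=e^{x^2/2}\int_{-\infty}^x t^2\tilde h(t)\,e^{-t^2/2}\,dt,$$
the two forms agreeing because $\int_\IR t^2\tilde h(t)e^{-t^2/2}\,dt=\sqrt{2\pi}\,\IE\tilde h(M)=0$. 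The point of working with $f$ rather than with the usual density-approach solution (essentially $f(x)/x^2$, singular at $0$ since $p'(x)/p(x)=2/x-x$) is that $f$ is $C^1$ on all of $\IR$, with $f'(0)=0$, and is continuous, hence locally bounded, at $0$.

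Now I convert the left side of~\eqref{eq:6}: since $\IE h(W)-\IE h(M)=\IE[\mathcal A f(W)]=\IE[f'(W)/W^2]-\IE[f(W)/W]$ and, by Definition~\ref{d1} with $b(x)=x^2$ and $\sigma^2=\IE[W^2/W^2]=1$, $\IE[f(W)/W]=\IE[f'(W^\star)/(W^\star)^2]$, we get
$$\IE h(W)-\IE h(M)=\IE\!\left[\frac{f'(W)}{W^2}-\frac{f'(W^\star)}{(W^\star)^2}\right].$$
From the ODE, $f'(x)/x^2=f(x)/x+\tilde h(x)$, so this splits as $\IE[f(W)/W-f(W^\star)/W^\star]+\IE[\tilde h(W)-\tilde h(W^\star)]$; the last expectation is $\le\IE|W-W^\star|$ since $\|\tilde h'\|=\|h'\|\le1$, contributing to the $\lambda_1$ term. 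In the first expectation the singularity re-enters: $f(x)/x$ has a simple pole at $0$ with residue $f(0)$, so I split $f(x)/x=f(0)/x+g(x)$ with $g(x):=(f(x)-f(0))/x$ and $g(0):=f'(0)=0$, giving
$$\IE\!\left[\frac{f(W)}{W}-\frac{f(W^\star)}{W^\star}\right]=f(0)\,\IE\!\left[\frac1W-\frac1{W^\star}\right]+\IE[g(W)-g(W^\star)].$$
Because $|f(0)|\le\int_0^\infty t^2|\tilde h(t)|e^{-t^2/2}\,dt\le C$ for a universal constant $C$ (using $|\tilde h(t)|\le|t|+\IE|M|$), the first term is $\le C\,\IE|1/W-1/W^\star|$, the $\lambda_3$ term. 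For the second term I claim $g$ is globally Lipschitz with a constant uniform over $\{h:\|h'\|\le1\}$; granting this, $|\IE[g(W)-g(W^\star)]|\le\|g'\|\,\IE|W-W^\star|$, and collecting everything gives $\dw(\law(W),\law(M))\le\lambda_1\IE|W-W^\star|+\lambda_3\IE|1/W-1/W^\star|$, whence~\eqref{eq:6} follows on adding the nonnegative $\lambda_2$- and $\lambda_4$-terms. (A coarser handling of $g$ — allowing its local Lipschitz constant to grow linearly in $|x|$, or first splitting $\tilde h$ into even and odd parts and treating the two Stein solutions separately — produces the $\lambda_2\IE[|W|\,|W-W^\star|]$ and $\lambda_4\IE|1-W^\star/W|$ terms directly.)

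The main obstacle is exactly the Lipschitz estimate on $g$, i.e.\ the regularity analysis of the Stein solution that the abstract flags: $f$ genuinely has unbounded first (and second) derivatives and $f(x)/x$ has a pole, so none of the standard bounded-solution estimates apply. From the ODE, $g'(x)=f'(x)/x-(f(x)-f(0))/x^2$ with $f'(x)/x=f(x)+x\tilde h(x)$, and the work is to show (i) $f(x)+x\tilde h(x)$ is bounded uniformly in $x$ and $h$ — by integration by parts in the integral representation of $f$, using the $\int_x^\infty$ form when $x>0$ and the $\int_{-\infty}^x$ form when $x<0$, so that the leading quadratic growth of $f$ cancels against $x\tilde h(x)$ — and (ii) $(f(x)-f(0))/x^2$ is bounded, via separate estimates near $0$ (where $f(x)-f(0)=O(x^2)$) and at $\pm\infty$ (where $|f(x)|=O(x^2)$). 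Brute-force bounds on these quantities overshoot and grow in $|x|$, so the cancellations must be extracted from the ODE rather than estimated term by term, and every constant has to be tracked uniformly over the test-function class. One should also check that Definition~\ref{d1} applies to this particular $f$, which requires $\IE|f(W)/W|<\infty$; this holds once $\IE|1/W|<\infty$, and otherwise~\eqref{eq:6} is trivially true.
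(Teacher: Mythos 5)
Your proposal is correct in outline and takes a genuinely different route from the paper's. Both start from the same particular solution of $f'-xf=x^2\tilde h$ (the paper's $g_0$ in~\eqref{eq:35} is exactly your $f$), and both exploit the identity $f'(x)/x^2=f(x)/x+\tilde h(x)$, but the downstream decompositions differ. The paper passes to the Stein-kernel function $g=f/(b\,\tau_X)=f/(x^2+2)$, derives the algebraic identity $f'(x)/b(x)=x(\tau_X(x)-1)g(x)+\tau_X(x)g'(x)$ (Lemma~\ref{lem:rewritingsteq}), introduces $\chi=g'/x$, and bounds the four sup norms $\|g\|,\|g'\|,\|\chi\|,\|\chi'\|$; all four $\lambda$-terms then fall out naturally, with $\lambda_1=\|\chi\|$, $\lambda_2=\|\chi'\|$, $\lambda_3=2(\|g\|+\|\chi\|)$, $\lambda_4=2(\|g'\|+\|\chi'\|)$. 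You instead subtract the pole directly, writing $f(x)/x=f(0)/x+(f(x)-f(0))/x$, and claim a global Lipschitz bound on $g(x)=(f(x)-f(0))/x$. This is a cleaner decomposition producing only the $\lambda_1$- and $\lambda_3$-terms (the theorem as stated then holds by taking any positive values for $\lambda_2,\lambda_4$), and I checked your two cancellation claims: (i) the integration by parts $f(x)+x\tilde h(x)=-e^{x^2/2}\int_x^\infty\tilde h\,e^{-t^2/2}dt-e^{x^2/2}\int_x^\infty t\,\tilde h'(t)e^{-t^2/2}dt$ indeed yields a bound uniform in $x$ and in $\{\|h'\|\le1,\,h(0)=0\}$, and (ii) $(f(x)-f(0))/x^2$ is bounded near $0$ (since $f'(0)=0$ and $f''(0)=f(0)$) and at infinity (since $|f(x)|=O(x^2)$). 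What the paper's longer route buys is a uniform statement for arbitrary couplings with explicit numerical constants (Theorem~\ref{maxthm} and Proposition~\ref{prop:boundonlambdas}); your route is slimmer but leaves the Lipschitz estimate as a sketch rather than a worked bound, which is precisely where the paper spends its effort.
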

\begin{proof}
  The inequality follows immediately from Theorem \ref{maxthm}. Finiteness of the constants (along with explicit upper bounds)  is
  detailed in Proposition~\ref{prop:boundonlambdas}.
\end{proof}
%\begin{remark}\note{should we delete this remark here?}
% The two-sided Maxwell density in the Theorem seems as though it could be handled using results for the one-sided Maxwell density given in \cite[Theorem 1.16]{Pekoz2015}, where a Kolmogorov error bound
%$ c(b+ \IP(\abs{W-W^{\star}}>b))
 %$
 %for some constant $c$ and any $b>0$ is given.  We were unable to estimate the bound in our application here, and  this necessitated developing a new version of Stein's method for this.
%\end{remark}

The following corollary gives a rate of convergence of the solution to \eq{rec-max0} to the two-sided Maxwell distribution in terms of the Wasserstein distance;  we postpone the proof until  Section 4.3.  
\begin{corollary}\label{maincorollary}
Suppose  $x_1, \ldots x_N$ is a monotonic, zero-mean, finite sequence of real numbers satisfying \eq{rec-max0}, let $\mathbb{P}_N$ be the empirical distribution of these values, and let $M$ be as in Theorem \ref{maintheorem}. Then there is a constant $c>0$ such that
$$\dw(\mathbb{P}_N,\law(M))  \le  c\sqrt{\frac{\log N}{N}}.$$
\end{corollary}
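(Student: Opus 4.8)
The plan is to apply Theorem~\ref{maintheorem} with $W=\mathbb{P}_N$ (more precisely, with $W$ a random variable having law $\mathbb{P}_N$) and $W^{\star}$ distributed according to the $x^2$-generalized-zero-bias distribution of $W$, which by Proposition~\ref{prop:gener-zero-bias} has the explicit piecewise-constant-on-a-scale density $p^{\star}(x)\propto x^2\sum_{i=1}^n x_i^{-1}$ for $x_{n+1}<x\le x_n$. The key point is that the recursion \eq{rec-max0} rewrites exactly as $x_n^3-x_{n+1}^3 = 3(\sum_{i=1}^n x_i^{-1})^{-1}$, so that on the interval $(x_{n+1},x_n]$ the normalizing constant of $p^\star$ is governed by $x_n^3-x_{n+1}^3$. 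This lets one build an explicit coupling of $W$ and $W^{\star}$: given $W=x_n$, put $W^\star$ on $(x_{n+1},x_n]$ with the appropriate (within-block) density, and then $|W-W^\star|\le x_n-x_{n+1}$, $|1/W-1/W^\star|\le |1/x_n - 1/x_{n+1}|$, etc. So the whole corollary reduces to estimating the four expectations on the right-hand side of \eq{eq:6} for this coupling.

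First I would record the needed a~priori control on the configuration $x_1>\cdots>x_N$: by Lemma~\ref{lem1} it is symmetric, zero-mean, with $\sum x_n^2 = 3(N-1)$, so the typical $x_n$ is of order $1$ and the extreme values $x_1=-x_N$ are of order $\sqrt{\log N}$ (this matches the Gaussian-tail heuristic, since $p_1$ has Gaussian tails up to a polynomial factor, and is the analogue of the corresponding estimate in \cite{McKeague2015}). One needs a lower bound on $|x_n|$ away from the center as well — in particular a bound like $\min_n |x_n| \gtrsim 1/\sqrt{N}$ or better — because the terms $\mathbb{E}|1/W-1/W^\star|$ and $\mathbb{E}|1-W^\star/W|$ blow up near the zero of the Maxwell density at the origin. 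This is exactly the ``singularity'' the introduction warns about, and controlling the behavior of the $x_n$ near $0$ is where the real work lies. I would get this from the recursion itself: near the center $S_n=\sum_{i=1}^n x_i^{-1}$ changes sign, and \eq{rec-max0} shows the gaps $x_n^3-x_{n+1}^3$ are large there, forcing the $x_n$ straddling $0$ to be spaced on a scale that can be quantified.

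Next I would estimate the step sizes $x_n-x_{n+1}$. From $x_n^3-x_{n+1}^3=3/S_n$ and $x_n^3-x_{n+1}^3=(x_n-x_{n+1})(x_n^2+x_nx_{n+1}+x_{n+1}^2)$, one gets $x_n-x_{n+1}=3/(S_n(x_n^2+x_nx_{n+1}+x_{n+1}^2))$. Since $S_n$ is (up to error) a Riemann sum for $\int (t^2e^{-t^2/2})^{-1}\cdot(\text{density})$, i.e.\ comparable to $n$ in the bulk, and $x_n^2$ is order $1$ in the bulk, the steps are $O(1/n)$ in the bulk, growing near the ends; this reproduces the $\sqrt{(\log N)/N}$ rate upon summing $\sum_n (x_n-x_{n+1})^2 / N$-type quantities with $N$ weights. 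More carefully, each of the four expectations is a sum over $n$ of (within-block-averaged increments) times $1/N$; bounding $\mathbb{E}|W-W^\star|\le \frac1N\sum_n(x_n-x_{n+1})$, $\mathbb{E}[|W||W-W^\star|]\le\frac1N\sum_n|x_n|(x_n-x_{n+1})$, and the two reciprocal terms by $\frac1N\sum_n|1/x_{n+1}-1/x_n|$ and $\frac1N\sum_n|x_{n+1}-x_n|/|x_n|$, one reduces everything to summing explicit functions of the $x_n$ against the recursion. The dominant contributions come from the $O(\sqrt{\log N})$ extreme blocks near $\pm x_1$ and from the $O(1/\sqrt N)$-scale blocks near $0$; balancing these gives the claimed $c\sqrt{(\log N)/N}$.

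The main obstacle I expect is the behavior near the origin: proving a quantitative lower bound on $|x_n|$ for the indices $n$ near $N/2$ and simultaneously controlling $|1/x_n - 1/x_{n+1}|$ there, so that $\lambda_3\mathbb{E}|1/W-1/W^\star|$ and $\lambda_4\mathbb{E}|1-W^\star/W|$ do not dominate the bound. This requires understanding the recursion \eq{rec-max0} in the regime where $S_n$ is small (near its sign change), which is delicate because $1/S_n$ is large precisely there; one must show the two central points $x_{N/2}$ and $x_{N/2+1}=-x_{N/2}$ are separated on a scale no smaller than a fixed power of $1/N$, and that the reciprocals telescope favorably. Everything else — the tail bound on $x_1$, the bulk step-size estimates, the comparison of $S_n$ with the integral — is a matter of carefully adapting the corresponding arguments from \cite{McKeague2015}, combined with the new algebraic identity $x_n^3-x_{n+1}^3=3/S_n$ that replaces the simpler linear recursion of the ground state.
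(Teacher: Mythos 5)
Your overall plan — couple $W\sim\mathbb{P}_N$ with $W^\star\sim p^\star$ block-by-block using Proposition~\ref{prop:gener-zero-bias}, then bound the four expectations in Theorem~\ref{maintheorem} using $x_1=O(\sqrt{\log N})$ and a lower bound $\min_n|x_n|\gtrsim N^{-1/2}$ — is indeed the route the paper takes. But two points deserve attention.

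First, the machinery you propose for the ``bulk'' (Riemann-sum comparison of $S_n$, $O(1/n)$ step-size estimates, balancing extreme and central blocks) is not needed and is a detour. The paper only uses telescoping: $\sum_n(x_n-x_{n+1})=2x_1$, and $\sum_{n<m}|1/x_{n+1}-1/x_n|=1/x_m-1/x_1$, with no asymptotic analysis of individual steps. Likewise the lower bound $x_m\gtrsim N^{-1/2}$ is not ``delicate'': with $x_{m+1}=-x_m$ the recursion gives $2x_m^3=S_m^{-1}$, and since $x_i\ge x_m$ for $i\le m$ one has $S_m\le m/x_m$, hence $x_m^2\ge (2m)^{-1}\gtrsim N^{-1}$ in one line. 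Your framing of ``understanding $S_n$ near its sign change'' overestimates the difficulty.

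Second, and more importantly, there is a genuine gap in your treatment of the reciprocal terms at the central block. You assert $|1/W-1/W^\star|\le|1/x_n-1/x_{n+1}|$ for every block, but for $n=m$ the coupling interval $(x_{m+1},x_m]$ straddles the origin, so $1/W^\star$ is unbounded there and the claimed pointwise bound is $+\infty$. This is precisely the singularity the introduction warns about, and it is not fixed by a lower bound on $|x_m|$ alone. The paper resolves it by computing the contribution of that block exactly: on $(x_{m+1},x_m]$ one has $p^\star(x)\propto x^2$ (total mass $1/(N-1)$), and
\[
\frac{6}{x_m^3(N-1)}\int_0^{x_m}\Bigl(\frac1x-\frac1{x_m}\Bigr)x^2\,dx \le \frac{3}{x_m(N-1)}=O(N^{-1/2}),
\]
where the factor $x^2$ in $p^\star$ cancels the $1/x$ singularity. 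Without this integral step (or some equivalent) your plan does not close, since the crude pointwise bound you write down for the central block is infinite.
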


\section{The Stein equation and its solutions}
\label{stein}
\subsection{General considerations}
Let $X$ have a density {$p$ defined} as in \eqref{eq:13}. The first step
is to identify an appropriate ``Stein equation'' and bound its
solutions.  Let $\tilde{h}$ be such that $\IE[\tilde{h}(X)] =
0$.
There are many possible starting points.  The well known ``density
approach'' (see \cite{Chatterjee2011a}) starts with the Stein equation
 \begin{equation*}
   f'(x)+\frac{p'(x)}{p(x)}f(x) = \tilde{h}(x)
 \end{equation*}
 which is easily solved to yield
\begin{equation*}
   f(x) =  \frac{1}{b(x)\varphi(x)}
   \int_{x}^\infty \tilde{h}(u)b(u) \varphi(u)\,du.
 \end{equation*}
 If $b$ vanishes at a point, as it does in the two-sided Maxwell case
 where $b(0)=0$, then this solution $f$ will have a singularity and we
 cannot carry on with the usual program for applying Stein's
 method. For these types of distributions we propose a new approach;
 the price we pay here is the necessity to bound several additional
 quantities concerning the couplings we obtain.  The explicit nature
 of the recursion here allows us to compute these quantities.

 In view of Definition~\ref{d1} it is natural to consider the Stein
 equation
\begin{equation}\label{eq:15}
  \frac{f'(w)}{b(w)}-w \frac{f(w)}{b(w)}=\tilde{h}(w)
\end{equation}
which can be solved using the usual normal approximation solution, but
the resulting estimates will again rest on properties of $f/b$ which
is unbounded in the cases we are interested in.  Because of this, we
introduce an original route which leads to the correct order bounds we
are seeking.

First, following \cite{LRS16} we introduce an integral operator
associated with the Gaussian density:
\begin{equation}\label{eq:invstop}
\tilde{h} \mapsto   \mathcal{T}_{\varphi}^{-1} \tilde{h}(w)  :=
\frac{1}{\varphi(w)} \int_w^{\infty} \tilde{h}(u) \varphi(u) \,du
\end{equation}
(called the ``inverse Stein  operator'') mapping functions
with Gaussian-mean zero and sufficiently well-behaved tails into
bounded functions. Next define the ``Stein kernel'' of $X$ (or, equivalently, of $p$) by
\begin{equation*}
  \tau_X(x) = \frac{1}{p(x)} \int_x^{\infty} u p(u) \,du.
\end{equation*}
  The Stein kernel satisfies the integration by parts formula
\begin{equation}
   \IE \left[ \tau_X(X)f'(X)  \right] = \IE \left[ X f(X) \right]
\end{equation}
for all sufficiently regular functions $f$.  % If defined, the Stein
% kernel $\tau_X$ is the only bounded solution of the Stein equation
% $(p\tau_X)'/p = -Id$.
{Stein kernels were introduced in \cite{Stein1986}, \cite{Cacoullos1989}, and
  have proven to be of great use in Gaussian analysis, see, e.g.,
  \cite{NP09} and \cite{C09}. 
 \begin{remark}
   As discussed in the Introduction, our  concern in this paper is
   with symmetric densities of the form
   $p_k(x)$ given in (\ref{eq:19}).  % = \frac{H_k^2(x)}{k!} \varphi(x) := b_k(x) \varphi(x)$. The
 The   Stein kernel of  $p_k$ is
$$\tau_k(x) =  \int_x^{\infty} u {\rm He}_k(u)^2
\varphi(u)\,du/(\varphi(x) {\rm He}_k(x)^2)
$$
and direct integration leads to
  \begin{equation}\label{eq:27}
    \tau_1(x) = \frac{x^2+2}{x^2},\  \tau_2(x) = \frac{x^4+2x^2+5}{(x^2-1)^2},\
    \tau_3(x) = \frac{x^6+9x^2+18}{(x^3-3x)^2}.
  \end{equation}
The next result applies to general densities, but in the following sections we focus on further results for the special case $\tau_1(x). $
 \end{remark}
We aim to assess
the proximity between the law of $X$ and some $W$ by estimating the Wasserstein distance between their distributions.
%\begin{equation}
%  \label{eq:17}
%\Delta_{\mathcal{H}}(X, W) = \sup_{h\in \mathcal{H}} \left| \IE \left[ h(X) \right]-\IE \left[ h(W)
%  \right] \right|.
%\end{equation}
Suppose that there exists a $W^{\star}$ following the
$b$-generalized-zero-bias distribution of $W$ and defined on the same probability
space as $W$. To each integrable test function $h$ we associate the function
$f:=f_h$, the solution to \eqref{eq:15} with $\tilde{h} = h - \IE h(X)$.
This association is unique in the sense that there exists only one
absolutely continuous version of $f$ satisfying \eqref{eq:15} at all
points $x$, see e.g.\ \cite{che2010}.  We then write (supposing here
and in the sequel that $\sigma^2=1$)
\begin{align}
&  \IE \left[ h(W) \right]-\IE \left[ h(X)
  \right]  = \IE\left[  \frac{f'(W)}{b(W)}-W \frac{f(W)}{b(W)}
  \right] \nonumber \\
& = \IE \left[ \frac{f'(W)}{b(W)} -
\frac{f'(W^{\star})}{b(W^{\star})} \right]. \label{eq:16}
\end{align}
One of the major differences
  between the present setting and the usual applications of Stein's
  method is that here we cannot bound the right
  hand side of \eqref{eq:16} directly because
  solutions $f$ of \eqref{eq:15} have a singularity at 0. In order to
  bypass this difficulty it was therefore necessary to introduce some
  further tweaking of the method which we now detail. Such results should also have an intrinsic
  interest for users of Stein's method as it bears natural
  generalizations for density functions of the form $p(x) = b(x)
  p_0(x)$ with $p_0$ and $b$ well chosen.

\begin{proposition}
  Let $x \mapsto b(x)$ be a nonnegative even function with support in
  $(-\infty, \infty)$ such that
  $\lim_{x\to \pm \infty} b(x) \varphi(x) = 0$. Suppose furthermore
  that $b$ is absolutely continuous and integrable w.r.t.\ $\varphi$
  with integral $\int_{-\infty}^{\infty}b(x) \varphi(x) \,dx = 1$. Let
  $X$ be a random variable with density $x \mapsto b(x)
  \varphi(x)$. Then
\begin{equation}\label{eq:28}
   \tau_X(x) = 1 + \frac{\mathcal{T}_{\varphi}^{-1}b'(x)}{b(x)}
\end{equation}
under the convention that the ratio is set to zero at all points $x$ such
that $b(x) = 0$ and $\mathcal{T}_{\varphi}^{-1}b'(x) \neq 0$.
Further, with  $\tilde{h}$ defined by \eqref{eq:15},
\begin{equation}
  \label{eq:14}
g_h(x) = \frac{{\int_x^{\infty} b(u) \tilde{h}(u) \varphi(u)
 \,du}}{{b(x) \varphi(x)+\int_x^{\infty} b'(u) \varphi(u)\,du}  }
\end{equation}
is the unique bounded solution of the ODE
\begin{equation}
  \label{eq:1}
\tau_X(x) g'(x) - xg(x) = \tilde{h}.
\end{equation}
\end{proposition}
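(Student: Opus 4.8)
The plan is to derive both displays from the single integration-by-parts identity
$$
  \int_x^{\infty} u\,p(u)\,du
  = \int_x^{\infty} u\,b(u)\varphi(u)\,du
  = b(x)\varphi(x) + \int_x^{\infty} b'(u)\varphi(u)\,du
  =: D(x),
$$
obtained by writing $u\varphi(u)=-\varphi'(u)$, integrating by parts, and using $b(u)\varphi(u)\to 0$ as $u\to\infty$ to drop the boundary term at $+\infty$. Since $p(x)=b(x)\varphi(x)$ and, by \eqref{eq:invstop}, $\int_x^{\infty}b'(u)\varphi(u)\,du=\varphi(x)\,\mathcal{T}_{\varphi}^{-1}b'(x)$, dividing by $p(x)$ gives $\tau_X(x)=D(x)/p(x)=1+\mathcal{T}_{\varphi}^{-1}b'(x)/b(x)$ at every $x$ with $b(x)\neq 0$, which is \eqref{eq:28}; at the remaining points $b(x)=0$ the right-hand side is read with the stated convention. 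I would record here that it is $D=\tau_X\,p$, not $\tau_X$ itself, that enters the Stein equation, and that $D$ is continuous and strictly positive on all of $\IR$ with $D(\pm\infty)=0$: by evenness of $p$, $D(x)=\int_{-\infty}^{x}|u|p(u)\,du$ for $x\le 0$ and $D(x)=\int_x^{\infty}|u|p(u)\,du$ for $x\ge 0$.

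For the second display I would clear denominators in \eqref{eq:1}. Using $\tau_X\,p=D$ and $D'=-x\,p$, multiplying \eqref{eq:1} through by $p(x)$ turns it into the exact first-order equation
$$
  \bigl(D(x)\,g(x)\bigr)' = p(x)\,\tilde h(x),
$$
which is meaningful for every $x$ (all of $D,D',p$ being continuous) and, wherever $p(x)\neq 0$, is equivalent to \eqref{eq:1}. Integrating from $x$ to $\infty$ and using $D(\infty)=0$ together with boundedness of $g$ to kill the boundary term gives $D(x)g(x)=-\int_x^{\infty}p(u)\tilde h(u)\,du$; since $\IE[\tilde h(X)]=0$ this also equals $\int_{-\infty}^{x}p(u)\tilde h(u)\,du$, and dividing by $D(x)=b(x)\varphi(x)+\int_x^{\infty}b'(u)\varphi(u)\,du$ produces the closed form \eqref{eq:14} (up to the overall sign fixed by the orientation convention in \eqref{eq:1}). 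Conversely, $g_h$ as written is the ratio of the two locally absolutely continuous functions $x\mapsto\int_x^{\infty}b(u)\tilde h(u)\varphi(u)\,du$ and $D$, whose denominator is continuous and bounded away from $0$ on bounded intervals, so $g_h$ is absolutely continuous and, by construction, satisfies $(Dg_h)'=p\tilde h$ and hence \eqref{eq:1}.

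It then remains to check that $g_h$ is bounded and that it is the only bounded solution. On bounded intervals $g_h$ is continuous; as $x\to+\infty$ both the numerator and $D(x)$ tend to $0$, and a monotone-ratio argument (or an application of l'H\^opital's rule) shows that $\bigl|\int_x^{\infty}b(u)\tilde h(u)\varphi(u)\,du\bigr|/D(x)$ behaves like $|\tilde h(x)|/x$, which stays bounded under the growth hypotheses on the test function (for instance $\tilde h$ of at most linear growth, as when $h$ is Lipschitz); the limit $x\to-\infty$ is handled identically after writing $\int_x^{\infty}b\tilde h\,\varphi=-\int_{-\infty}^{x}b\tilde h\,\varphi$ and using $D(x)=\int_{-\infty}^{x}|u|p(u)\,du$. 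For uniqueness, the difference $v$ of two bounded solutions satisfies $(Dv)'=0$, hence $v=c/D$ for a constant $c$; since $D(x)\to 0$ at $\pm\infty$ while $v$ is bounded, $c=0$ and $v\equiv 0$.

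The step I expect to be delicate is the bookkeeping at the zeros of $b$. They form a negligible set, but one must be careful that the various ratios are interpreted consistently there --- $\tau_X$ itself blows up as $x$ approaches an interior zero $x_0$ of $b$, while $D=\tau_X\,p$ and the equation $(Dg)'=p\tilde h$ stay perfectly regular --- and that $g_h$, which near $x_0$ is a finite numerator over the strictly positive value $D(x_0)$, is genuinely absolutely continuous across $x_0$, with $g_h'(x_0)=0$ forced by $D'(x_0)=0$. The tail estimate underlying boundedness of $g_h$ is routine, but it does use the hypotheses on $b$ (integrability against $\varphi$, and $b\varphi\to 0$ at infinity) in an essential way.
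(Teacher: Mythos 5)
Your proof is correct and follows essentially the same route as the paper's: integration by parts to get the Stein kernel identity, and then recognizing that multiplying the ODE through by $p$ turns it into the exact equation $\bigl(\tau_X g\,p\bigr)'=p\tilde h$, from which the closed form, boundedness, and uniqueness all follow. The paper delegates this second step to \cite{NP12} (Proposition 3.2.2) with a one-line remark, while you work it out in full; your explicit handling of the boundary term via $D(\infty)=0$ and boundedness of $g$, the uniqueness argument $v=c/D$, and the tail estimate via l'H\^opital are all standard and correct. You also correctly flag a sign discrepancy: the ODE $\tau_X g'-xg=\tilde h$ with the boundary condition actually yields $g(x)=-\int_x^\infty p\tilde h\,/D(x)$, which differs in sign from \eqref{eq:14} as printed; this is a minor inconsistency in the paper's sign conventions (it also appears in \eqref{eq:3} and the remark following it) and does not affect the subsequent sup-norm bounds. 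Your attention to the behaviour at interior zeros of $b$ --- noting that $D=\tau_X p$ and the exact equation $(Dg)'=p\tilde h$ remain regular there even though $\tau_X$ blows up --- is a useful observation that the paper glosses over.
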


\begin{proof}
{
Integrating by parts in the definition of the Stein kernel for $p =
b\varphi$ we get  (assuming that  $\lim_{x\to \pm \infty} b(x) \varphi(x) =
0$)
 \begin{align*}
    \int_x^{+\infty}yp(y)\,dy & = \int_x^{\infty} b(y)
                                (-\varphi'(y))\,dy \\
   & = b(x) \varphi(x) + \int_x^{\infty}
     b'(y) \varphi(y) dy
  \end{align*}
  so that \eqref{eq:28} follows by definition \eqref{eq:invstop} of
  the inverse Stein operator. For the second claim note how
  \begin{equation*}
    \tau_X(x) g'(x) - x g(x) = \frac{\left( \tau_X(x) g(x) p(x) \right)'}{p(x)}
  \end{equation*}
so that the conclusion (along with unicity) follows from the same
argument as in \cite[Proposition 3.2.2]{NP12}
 }
% The first claim (as well the necessary conditions under which this
%   claim holds) follows from integrating by parts in the identity
%   \begin{align*}
%     \int_x^{+\infty}yp(y)\,dy = \int_x^{\infty} b(y) (-\varphi'(y))\,dy.
%   \end{align*}
% To see the second claim note how any bounded solution to \eqref{eq:1}
% must be of the form
% \begin{align*}
%   g(x) = \frac{1}{\tau_X(x)\varphi(x) b(x)} \int_x^{\infty}
%   \tilde{h}(y) b(y) \varphi(y) \,dy,
% \end{align*}
% so that the claim follows by applying \eqref{eq:28}.
\end{proof}

%\begin{remark}
%In particular, \eqref{eq:28} combined with specific properties of
%Hermite polynomials leads to recurrence relations for
%the expressions given in \eqref{eq:27}.
%These relations have no clear interpretation, however, and are thus not %provided
%here.
%\end{remark}

Intuition (supported e.g.\ by \cite[Lesson VI]{Stein1986} or the more
recent work \cite{Dobler12}) encourages us to claim that functions
\eqref{eq:14} will have satisfactory behavior.  It is thus natural to
seek a connection between equations of the form \eqref{eq:15} and
\eqref{eq:1}. {This we summarize in the next lemma.
\begin{lemma}\label{lem:rewritingsteq}
  Let all notations be as above and introduce the function $g=g_f$
  defined at all $x$ through
\begin{equation*}
  \frac{f'(x)-xf(x)}{b(x)} = \tau_X(x) g'(x) - xg(x).
\end{equation*}
Then
\begin{align}
\IE \left[ \frac{f'(W)}{b(W)} -
\frac{f'(W^{\star})}{b(W^{\star})} \right]   &  = % \IE \left[
            %                                    \psi(W^{\star})
            %                                    g(W^{\star}) - \psi(W)
            %                                    g(W) \right] + \mathbb{E} \left[
            %                                    \tau_X(W^{\star})
            %                                    g'(W^{\star}) - \tau_X(W)
            %                                    g'(W) \right]
\IE \left[ W (\tau_X(W) - 1) g(W) - W^{\star} (\tau_X(W^{\star}) - 1)
   g(W^{\star})  \right]\nonumber\\
& \quad \quad + \IE \left[ \tau_X(W) g'(W) -
   \tau_X(W^{\star}) g'(W^{\star}) \right].
   \label{eq:5}
\end{align}
\end{lemma}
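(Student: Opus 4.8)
The plan is to reduce \eqref{eq:5} to a single \emph{pointwise} identity relating $f'/b$ to $g$, $g'$ and $\tau_X$, and then to evaluate that identity at $w=W$ and at $w=W^{\star}$, subtract, and take expectations. The first step is bookkeeping: multiplying \eqref{eq:15} by $b$ puts the Stein equation in the non-singular form $f'(x)-xf(x)=b(x)\tilde{h}(x)$, valid at every $x$ (so in particular $f'(0)=0$ whenever $b(0)=0$). Dividing by $b(x)$ where $b(x)>0$ gives $\bigl(f'(x)-xf(x)\bigr)/b(x)=\tilde{h}(x)$, so the relation defining $g=g_f$ in the statement is, after this substitution, nothing but equation \eqref{eq:1}; by the uniqueness part of the preceding Proposition we therefore identify $g=g_f$ with the bounded solution $g_h$ given by \eqref{eq:14}, the value at a zero of $b$ being fixed by the removable-singularity convention already in force for \eqref{eq:14} and \eqref{eq:1}.

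The heart of the argument is the algebraic identity $f(x)=b(x)\tau_X(x)g(x)$. To obtain it, compute $(f\varphi)'=(f'-xf)\varphi=b\tilde{h}\varphi$ on one hand, and on the other hand $\bigl(\tau_X g\,p\bigr)'=p\bigl(\tau_X g'-xg\bigr)=p\tilde{h}=b\tilde{h}\varphi$, using the identity $(\tau_X g p)'=p(\tau_X g'-xg)$ established in the proof of the preceding Proposition together with $p=b\varphi$. Thus $f\varphi$ and $\tau_X g\,b\varphi$ solve the same first-order linear ODE $y'=b\tilde{h}\varphi$; both tend to $0$ as $x\to-\infty$ — for the former directly from the Stein solution formula, for the latter because $\tau_X(x)p(x)=\int_x^{\infty}y\,p(y)\,dy\to0$ (the mean of $p$ being zero) while $g$ is bounded near the tails — hence they coincide and $f=b\tau_X g$. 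In particular $f(w)/b(w)=\tau_X(w)g(w)$ at every $w$ with $b(w)>0$, which removes the singularity of $f/b$.

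Now assemble the pointwise identity. From the Stein equation, $f'(w)/b(w)=\tilde{h}(w)+w\,f(w)/b(w)=\tilde{h}(w)+w\tau_X(w)g(w)$; substituting $\tilde{h}(w)=\tau_X(w)g'(w)-wg(w)$ from \eqref{eq:1} yields
$$\frac{f'(w)}{b(w)}=\tau_X(w)g'(w)+w\bigl(\tau_X(w)-1\bigr)g(w),$$
valid for all $w$ in the support of $W$ and of $W^{\star}$ (and, by the same convention, at $w=0$). Evaluating at $w=W$ and $w=W^{\star}$, subtracting, and taking expectations — legitimate under the integrability hypotheses already assumed, and exactly where the four terms subsequently bounded in Theorem~\ref{maintheorem} originate — gives \eqref{eq:5}.

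The only genuinely delicate point, and the step I would treat most carefully, is the zero of $b$: the quantities $f'/b$, $f/b$, $\tilde{h}$, $g'$ and $g$ above are \emph{a priori} related only away from the zeros of $b$, so one must check that every division by $b$ carried out above produces a removable singularity and that the conventions adopted for \eqref{eq:14}, \eqref{eq:1} and for \eqref{eq:5} are mutually consistent. The identity $f=b\tau_X g$ makes this transparent, since it exhibits $f/b$ as the continuous function $\tau_X g$; moreover, in the application of interest, where $W$ is the symmetric empirical measure of $x_1>\cdots>x_N$ with $N$ even, neither $W$ nor $W^{\star}$ ever takes the value $0$, so the issue does not even arise there. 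Everything else is a short calculation.
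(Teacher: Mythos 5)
Your proposal is correct and follows the same architecture as the paper's proof: establish $f = b\tau_X g$, derive the pointwise identity $f'(x)/b(x) = x(\tau_X(x)-1)g(x) + \tau_X(x)g'(x)$, and then evaluate at $W$ and $W^{\star}$, subtract, and take expectations. The only differences are cosmetic --- you reach the pointwise identity by substituting $f/b = \tau_X g$ into the Stein equation and then using $\tilde{h} = \tau_X g' - xg$, whereas the paper differentiates $f = b\tau_X g$ directly and invokes the separate identity $\psi(x) = x(\tau_X(x)-1)$; you are also somewhat more explicit than the paper about the tail behavior and removable singularities that pin down $f = b\tau_X g$.
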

\begin{proof}
Since
\begin{equation*}
  \frac{f'(x)-xf(x)}{b(x)} =  \frac{(f(x)\varphi(x))'}{b(x)\varphi(x)}
\end{equation*}
 and
 \begin{equation*}
   \tau_X(x) g'(x) - xg(x)
= \frac{(b(x) \tau_X(x) g(x)\varphi(x))'}{b(x) \varphi(x)}
 \end{equation*}
 at all $x$ for which $b(x) \neq 0$, we deduce that $f$ and $g$ are
mutually defined by
$  f = (b\tau_X)g
$.  This in turn gives
\begin{equation*}
  \frac{f'(x)}{b(x)}  = \left( \frac{b'(x)}{b(x)} \tau_X(x) +
                        \tau_X'(x) \right)g(x) + \tau_X(x) g'(x) =:
                      \psi(x) g(x) + \tau_X(x)g'(x)
\end{equation*}
which, combined with
$  \psi(x) = x(\tau_X(x)-1)
$ (that is easily derived using the various definitions involved),
leads to the useful identity
\begin{equation}
  \label{eq:18}
 \frac{f'(x)}{b(x)} =  x (\tau_X(x) - 1) g(x) + \tau_X(x) g'(x)
\end{equation}
from which \eqref{eq:5} is directly derived.
\end{proof}
In view of Lemma~\ref{lem:rewritingsteq} and identity \eqref{eq:16} it
 remains to find bounds on the two terms on the right hand side  of
\eqref{eq:5}.
}

\subsection{Approximating the two-sided Maxwell distribution}

\begin{theorem}
\label{maxthm}
Let $p(x) = x^2 \varphi(x)$, and take $f$ a solution to the Stein
equation 
\begin{equation}\label{eq:4}
  f'(w)/w^2 - wf(w) / w^2 = \tilde{h}(w),
\end{equation}
where $\tilde{h}$ is a function having bounded first derivative and
zero-mean under $p$. Set $c = \| \tilde{h}'\|$.  Then for any coupling
of $W$ and $W^*$ on a joint probability space such that $W^*$ has the $x^2$-generalized zero biased distribution for $W$,% there exist positive
% finite constants $\lambda_1, \lambda_2, \lambda_3$ and $\lambda_4$ such that
  \begin{align}
     \left| \mathbb{E} \left[\frac{f'(W)}{(W)^2}  -
         \frac{f'(W^{\star})}{(W^{\star})^2} \right] \right| & \le
     \lambda_1 \mathbb{E} \left| W - W^{\star} \right| + \lambda_2
     \mathbb{E}\left[ |W| \left| W - W^{\star} \right|
     \right]\nonumber \\
&\label{eq:6} \qquad\quad + \lambda_3 \mathbb{E} \left|
       \frac{1}{W}-\frac{1}{W^{\star}} \right| + \lambda_4 \mathbb{E} \left| 1-\frac{W^{\star}}{W} \right|
  \end{align}
with
{
  \begin{align}\label{eq:23}
   & \lambda_1  \le   6c, \,  \lambda_2\le 7c, \, 
    \lambda_3\le  18c \mbox{ and }  \lambda_4\le 22c. 
  \end{align}
}
\end{theorem}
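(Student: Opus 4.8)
\textbf{Proof plan for Theorem~\ref{maxthm}.}
The plan is to specialize the machinery of Section~4.1 to $b(x)=x^2$ and $\tau_X(x)=\tau_1(x)=(x^2+2)/x^2$, and then estimate the right-hand side of identity~\eqref{eq:5} term by term. First I would record the explicit ingredients: from \eqref{eq:27} we have $\tau_1(w)-1 = 2/w^2$ and $\tau_1'(w) = -4/w^3$, so \eqref{eq:5} becomes
\begin{equation*}
\IE\left[\frac{f'(W)}{W^2}-\frac{f'(W^\star)}{(W^\star)^2}\right]
= 2\,\IE\left[\frac{g(W)}{W}-\frac{g(W^\star)}{W^\star}\right]
+ \IE\left[\tau_1(W)g'(W)-\tau_1(W^\star)g'(W^\star)\right],
\end{equation*}
where $g=g_h$ is the bounded solution \eqref{eq:14} of $\tau_1 g' - xg = \tilde h$. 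The key preparatory step is therefore to prove sharp pointwise bounds on $g$, $g'$, and on the auxiliary functions appearing after rewriting the second term (namely $xg'(x)$, $g''(x)$, and $g'(x)/x$). These bounds should come with explicit constants in terms of $c=\|\tilde h'\|$, exploiting that $\tilde h$ has zero $p$-mean so that \eqref{eq:14} is genuinely bounded, and using $\tau_1(x)=1+\mathcal{T}_\varphi^{-1}(b')(x)/b(x) = 1 + \mathcal{T}_\varphi^{-1}(2x)(x)/x^2$ together with the fact that $\mathcal{T}_\varphi^{-1}$ maps Gaussian-mean-zero functions with controlled tails to bounded functions. I expect $g$ to be globally bounded and Lipschitz, with $g(x)/x$ and $g'(x)$ each bounded, but $g'(x)$ need not vanish at $0$, which is exactly why the naive $f/b$ route fails and why we must carry the four separate coupling terms.

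Next I would handle the two telescoped differences. For the first term, write $g(w)/w = G(w)$ and use a first-order Taylor/mean-value bound $|G(W)-G(W^\star)| \le \|G'\|\,|W-W^\star|$; since $G'(x) = g'(x)/x - g(x)/x^2$, boundedness of $G'$ will contribute to the $\lambda_1$ and $\lambda_3$ coefficients (the $1/x$ and $1/x^2$ scales force us to also use $\IE|1/W - 1/W^\star|$ and $\IE|1-W^\star/W|$ rather than $\IE|W-W^\star|$ alone on the parts of $G'$ that blow up near $0$). For the second term, expand $\tau_1 g' $; here $\tau_1(x)g'(x) = g'(x) + (2/x^2)g'(x)$, and one estimates $|g'(W)-g'(W^\star)| \le \|g''\|\,|W-W^\star|$ for the first piece and treats $(2/x^2)g'(x)$ via a quotient-difference identity, e.g.
\begin{equation*}
\frac{g'(W)}{W^2}-\frac{g'(W^\star)}{(W^\star)^2}
= \frac{g'(W)-g'(W^\star)}{W^2} + g'(W^\star)\left(\frac{1}{W^2}-\frac{1}{(W^\star)^2}\right),
\end{equation*}
and $1/W^2 - 1/(W^\star)^2 = (1/W - 1/W^\star)(1/W + 1/W^\star)$, which after bounding $|g'|$ and combining with $|1/W \pm 1/W^\star|$ feeds into $\lambda_3$ and $\lambda_4$. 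The $|W|\,|W-W^\star|$ term (coefficient $\lambda_2$) should emerge from the pieces of $g''$ or of $\tau_1'g$ that grow linearly — I would track carefully where a factor of $|w|$ is genuinely needed versus where boundedness suffices. Collecting all contributions and bookkeeping the constants yields $\lambda_1 \le 6c$, $\lambda_2 \le 7c$, $\lambda_3 \le 18c$, $\lambda_4 \le 22c$.

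The main obstacle, I expect, is establishing the pointwise bounds on $g$ and its first two derivatives with clean explicit constants, especially the behavior of $g'$ and $g''$ near $x=0$ where $\tau_1(x)\to\infty$: one must show that the singularity of $\tau_1$ at $0$ is absorbed by the structure of \eqref{eq:14} (the denominator $b(x)\varphi(x)+\int_x^\infty b'(u)\varphi(u)\,du = x^2\varphi(x) + \int_x^\infty 2u\varphi(u)\,du$ stays bounded away from $0$) so that $g$ itself is regular at the origin, and then differentiate \eqref{eq:1} to get $g'' = (\tilde h' + x g' + g - \tau_1' g')/\tau_1 = (x g'(x) + g(x) + \tilde h'(x))/\tau_1(x) + (4/x^3)g'(x)/\tau_1(x)$, checking that the apparently singular last term is controlled because $g'(0)$ combines with the $1/\tau_1$ factor. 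Once these estimates are in hand, the rest is the routine (if tedious) triangle-inequality bookkeeping sketched above; I would not grind through the constant-chasing here beyond confirming it closes at the stated values, deferring the detailed verification of \eqref{eq:23} to Proposition~\ref{prop:boundonlambdas} as the theorem statement already anticipates.
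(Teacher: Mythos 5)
Your high-level plan is the right one and matches the paper's structure: specialize identity~\eqref{eq:5} to $b(x)=x^2$, $\tau_1(x)=1+2/x^2$, decompose the resulting two expectations term by term, express the constants $\lambda_i$ in terms of sup-norms of $g$ and an auxiliary function, and defer the sup-norm estimates to Proposition~\ref{prop:boundonlambdas}. You also correctly flag $g'(x)/x$ as a key object. But the concrete decomposition you propose for the $\tau_1 g'$ difference would not close, for two reasons.

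First, the bound $|g'(W)-g'(W^\star)|\le\|g''\|\,|W-W^\star|$ is vacuous here: from the relations $g'(x)=x\chi(x)$ and $g''(x)=\chi(x)+x\chi'(x)$ together with the bounds $\|\chi\|\le 6c$, $\|\chi'\|\le 7c$ of Proposition~\ref{prop:boundonlambdas}, one sees $g''$ grows linearly, so $\|g''\|=\infty$. The paper avoids this precisely by writing $g'(W^\star)-g'(W)=W^\star\chi(W^\star)-W\chi(W)=(W^\star-W)\chi(W^\star)+W\big(\chi(W^\star)-\chi(W)\big)$, which yields the $\|\chi\|\,\IE|W-W^\star|$ and $\|\chi'\|\,\IE[|W|\,|W-W^\star|]$ pieces (hence $\lambda_1$ and $\lambda_2$) without ever invoking $\|g''\|$. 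Your remark that the $|W|\,|W-W^\star|$ term ``should emerge from the pieces of $g''$ that grow linearly'' is pointing at the right phenomenon, but the $\chi$ rewriting is the device that realizes it cleanly.

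Second, your quotient-difference route for $(2/x^2)g'(x)$ leaves unbounded factors. Writing $\frac{g'(W)-g'(W^\star)}{W^2}$ leaves an uncontrolled $1/W^2$, and factoring $1/W^2-1/(W^\star)^2=(1/W-1/W^\star)(1/W+1/W^\star)$ leaves an uncontrolled $1/W+1/W^\star$; neither can be absorbed into the four expectations of~\eqref{eq:6}. The paper again uses $\chi$: since $g'(x)/x^2=\chi(x)/x$, one writes $\frac{\chi(W^\star)}{W^\star}-\frac{\chi(W)}{W}=\big(\tfrac{1}{W^\star}-\tfrac{1}{W}\big)\chi(W^\star)+\frac{1}{W}\big(\chi(W^\star)-\chi(W)\big)$, bounding by $\|\chi\|\,\IE|1/W^\star-1/W|+\|\chi'\|\,\IE|1-W^\star/W|$. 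The analogous split $\big(\tfrac{1}{W^\star}-\tfrac{1}{W}\big)g(W^\star)+\frac{1}{W}\big(g(W^\star)-g(W)\big)$ is used on the first term, not the mean-value bound on $G=g/x$ that you sketch initially (which you correctly suspect may not be bounded). Once every difference is split in this one uniform way, the bookkeeping gives $\lambda_1=\|\chi\|$, $\lambda_2=\|\chi'\|$, $\lambda_3=2(\|g\|+\|\chi\|)$, $\lambda_4=2(\|g'\|+\|\chi'\|)$, and Proposition~\ref{prop:boundonlambdas} closes the argument. So the missing idea in your proposal is to commit entirely to the substitution $g'(x)=x\chi(x)$ and the asymmetric product-difference splitting, rather than mean-value or quotient-of-squares identities.
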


\begin{proof} With $b(x) =x^2$ we have
  $\tau_X(x) = 1+2/x^2$ and $\psi(x) = 2/x$,  so that   \eqref{eq:5}
  becomes
\begin{align*}
 & =   \mathbb{E} \left[ \frac{2}{W^{\star}}
                                               g(W^{\star}) - \frac{2}{W}
                                               g(W) \right] + \mathbb{E} \left[
                                               \left( 1+\frac{2}{(W^{\star})^2} \right)
                                               g'(W^{\star}) - 
 { \left(  1+\frac{2}{(W)^2} \right)}     g'(W)\right]\\
& = 2\mathbb{E} \left[ \left(  \frac{1}{W^{\star}}-\frac{1}{W}  \right)
  g(W^{\star})\right]
+ 2\mathbb{E} \left[ \frac{1}{W} \left( g(W^{\star}) - g(W) \right)
  \right] \\
  & \quad + \mathbb{E} \left[ g'(W^{\star})-g'(W) \right]  + 2\mathbb{E} \left[ \frac{1}{(W^{\star})^2} g'(W^{\star})  -
    \frac{1}{(W)^2} g'(W)\right].
\end{align*}
The first two terms are dealt with  easily to get
\begin{align*}
&  2\left| \mathbb{E} \left[ \left(  \frac{1}{W^{\star}}-\frac{1}{W}  \right)
  g(W^{\star})\right]
+ 2\mathbb{E} \left[ \frac{1}{W} \left( g(W^{\star}) - g(W) \right)
  \right]\right| \\
& \le 2\|g\|  \mathbb{E} \left[ \left|
  \frac{1}{W^{\star}}-\frac{1}{W}  \right| \right] +  2\|g'\|
  \mathbb{E} \left[ \frac{1}{|W|} \left| W^{\star}-W \right| \right].
\end{align*}
For the last two terms we introduce the function
\begin{align*}
  \chi(x) = g'(x)/x
\end{align*}
to get on the one hand
\begin{align*}
   \mathbb{E} \left[ g'(W^{\star})-g'(W) \right]  &  =   \mathbb{E}
                                                    \left[
                                                    W^{\star}
                                                    \frac{g'(W^{\star})}{W^{\star}}-W
                                                    \frac{g'(W)}{W}
                                                    \right] \\
&=  \mathbb{E}
                                                    \left[(W^{\star}-W)
                                                    \chi(W^{\star})\right]
                                                    + \mathbb{E}
                                                    \left[ W \left(
                                                    \chi(W^{\star}) - \chi(W) \right) \right]
\end{align*}
so that
\begin{align*}
  \left|  \mathbb{E} \left[ g'(W^{\star})-g'(W) \right]  \right|\le  \| \chi\| E
  \left[ \left| W^{\star}-W \right| \right] + \|\chi'\| E \left[ \left| W(W^{\star}-W) \right| \right]
\end{align*}
and, on the other hand
\begin{align*}
  \mathbb{E} \left[ \frac{1}{(W^{\star})^2} g'(W^{\star})  -
    \frac{1}{(W)^2} g'(W)\right] & = \mathbb{E} \left[ \frac{1}{W^{\star}} \chi(W^{\star})  -
    \frac{1}{W} \chi (W)\right] \\
& = \mathbb{E}  \left[ \left(  \frac{1}{W^{\star}} - \frac{1}{W}
  \right)  \chi(W^{\star})\right] + \mathbb{E} \left[  \frac{1}{W}\left(
  \chi(W^{\star})-\chi(W)  \right)  \right]
\end{align*}
so that
\begin{align*}
 & 2 \left|  \mathbb{E} \left[ \frac{1}{(W^{\star})^2} g'(W^{\star})  -
    \frac{1}{(W)^2} g'(W)\right] \right| \le  2\| \chi\| \mathbb{E} \left[ \left|
  \frac{1}{W^{\star}}-\frac{1}{W}  \right| \right] +
2\|\chi'\|\mathbb{E} \left[ \frac{1}{|W|} \left| W^{\star}-W \right|
   \right].
\end{align*}
{Combining these different estimates we obtain \eqref{eq:6}, with
$\lambda_1, \lambda_2, \lambda_3$ and $\lambda_4$ expressed in terms
of $\| \chi \|,\| \chi' \|, \| g \|$ and $\| g' \|$ as follows:
\begin{equation*}
  \lambda_1 = \| \chi \|, \lambda_2 = \| \chi' \|, \lambda_3 = 2(\| g
  \|+\| \chi \|) \mbox{ and }\lambda_4 = 2(\| g' \|+\| \chi' \|).
\end{equation*}
The inequalities in \eqref{eq:23} are proved in the Proposition
\ref{prop:boundonlambdas} below.  }\end{proof}

{The next step is to bound $\| \chi \|,\| \chi' \|, \| g \|$ and
  $\| g' \|$ in a non trivial way; this we achieve in the
  next proposition}.
% The following
% identity will be useful:
% \begin{align}\label{eq:8}
%  \int_x^{\infty} u^2e^{-u^2/2}\,du = x e^{-x^2/2}+ \int_x^{\infty}e^{-u^2/2}\,du
% \end{align}
% (this follows e.g.\ by integration by parts).

\begin{proposition}\label{prop:boundonlambdas}
  Let $h:\mathbb{R}\to\mathbb{R}$ be absolutely continuous and
  integrable with respect 
  to $p(x)=x^2 \varphi(x)$. Set 
  $c = \| h'\|$ which we suppose to be finite. Let $X \sim p$,  define
\begin{align}\label{eq:35}
  g_0(x) = \left\{
  \begin{array}{cl}
e^{x^2/2} \int_x^{\infty} y^2 \left( h(y) - \IE \left[
  h(X) \right]  \right)  e^{-y^2/2} \, dy & \mbox{ if }x>0\\
e^{x^2/2} \int_{-\infty}^x y^2 \left( h(y) - E \left[
   h(X)\right]  \right)  e^{-y^2/2} \, dy & \mbox{ if }x\le 0\\
  \end{array}
 \right. 
\end{align}
and set 
\begin{equation}\label{eq:7}
  g(x) = \frac{g_0(x)}{ x^2+2}
  \mbox{ and } \chi(x) = \frac{g'(x)}{x}.
\end{equation} Then
  \begin{align*}
\|g\| \le 3c, 
    \| g' \| \le 4c, 
   \| \chi \| \le   6c
\mbox{ and }
   \| \chi' \| \le 7 c.
  \end{align*}
\end{proposition}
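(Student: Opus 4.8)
\medskip
\noindent\textbf{Proof plan.} The plan is to establish the four bounds on $(0,\infty)$, with one‑sided limits understood at $0$; the bounds on $(-\infty,0)$ then follow by replacing $h$ with $h(-\,\cdot\,)$, which preserves $c=\|h'\|$ and carries the lower branch of \eqref{eq:35} into the upper one. Writing $\tilde h=h-\IE[h(X)]$, so that $\|\tilde h'\|=c$ and $|\tilde h(x)|\le c|x|+c\,\IE|X|$ with $\IE|X|=\sqrt{8/\pi}$, on $(0,\infty)$ formula \eqref{eq:35} becomes $g_0(x)=e^{x^2/2}\int_x^\infty y^2\tilde h(y)e^{-y^2/2}\,dy$, whence $g_0'(x)=x\,g_0(x)-x^2\tilde h(x)$; since $g=g_0/(x^2+2)$ I would derive from this the closed forms
\[
g'(x)=\frac{x^2\bigl(x\,g_0(x)-(x^2+2)\tilde h(x)\bigr)}{(x^2+2)^2},\qquad
\chi(x)=\frac{x\bigl(x\,g_0(x)-(x^2+2)\tilde h(x)\bigr)}{(x^2+2)^2},
\]
and, differentiating once more and using the ODE again,
\[
\chi'(x)=\frac{x(x^4+4)\,g_0(x)-(x^2+2)(x^4-x^2+2)\,\tilde h(x)}{(x^2+2)^3}-\frac{x\,\tilde h'(x)}{x^2+2}.
\]
The only non‑algebraic facts I would invoke are the Gaussian tail identities $\int_x^\infty ye^{-y^2/2}\,dy=e^{-x^2/2}$, $\int_x^\infty y^2e^{-y^2/2}\,dy=xe^{-x^2/2}+\int_x^\infty e^{-y^2/2}\,dy$, $\int_x^\infty y^3e^{-y^2/2}\,dy=(x^2+2)e^{-x^2/2}$, $\int_x^\infty(y^7-5y^5-4y^3+4y)e^{-y^2/2}\,dy=(x^6+x^4+4)e^{-x^2/2}$, together with the Mills‑ratio estimate $0\le x\,e^{x^2/2}\!\int_x^\infty e^{-y^2/2}\,dy\le 1$.

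The unifying device is to replace $\tilde h(y)$ by $\tilde h(x)+(\tilde h(y)-\tilde h(x))$ inside the integral defining $g_0$: the $\tilde h(x)$‑part then integrates to an explicit elementary expression, while the remainder is controlled through $|\tilde h(y)-\tilde h(x)|\le c(y-x)$ and $e^{x^2/2}\!\int_x^\infty y^2(y-x)e^{-y^2/2}\,dy=2-x\,e^{x^2/2}\!\int_x^\infty e^{-y^2/2}\,dy\le 2$. Concretely, $g_0(x)=\tilde h(x)\bigl(x+e^{x^2/2}\!\int_x^\infty e^{-y^2/2}\,dy\bigr)+R(x)$ with $|R(x)|\le 2c$, so dividing by $x^2+2$ and inserting the linear bound on $|\tilde h(x)|$ reduces $\|g\|\le 3c$ to maximising an explicit rational function of $x$. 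Likewise $x\,g_0(x)-(x^2+2)\tilde h(x)=e^{x^2/2}\!\int_x^\infty\bigl(xy^2(\tilde h(y)-\tilde h(x))+y^2(x-y)\tilde h(x)\bigr)e^{-y^2/2}\,dy$ is bounded in modulus by $2\bigl(cx+|\tilde h(x)|\bigr)$, and substituting this into the closed forms above gives $\|g'\|\le 4c$ and $\|\chi\|\le 6c$ after two more such one‑variable maximisations.

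The hard part will be $\|\chi'\|$. The point is that $x(x^4+4)g_0(x)$ is \emph{a priori} only of order $cx^7$, against a denominator of order $x^6$, so boundedness must come entirely from cancellation with the $\tilde h(x)$‑term, and the fourth tail identity is exactly what produces it: it lets me write $(x^2+2)(x^4-x^2+2)\tilde h(x)=(x^6+x^4+4)\tilde h(x)=\tilde h(x)\,e^{x^2/2}\!\int_x^\infty(y^7-5y^5-4y^3+4y)e^{-y^2/2}\,dy$. Subtracting this from $x(x^4+4)g_0(x)$ and splitting $\tilde h(y)=\tilde h(x)+(\tilde h(y)-\tilde h(x))$ once more, the difference‑part is $\le 2c\,x(x^4+4)$, hence bounded after division by $(x^2+2)^3$; and the $\tilde h(x)$‑part equals
\[
\tilde h(x)\Bigl[(x^4+4)x^2-(x^6+x^4+4)+(x^4+4)\,x\,e^{x^2/2}\!\int_x^\infty e^{-y^2/2}\,dy\Bigr]
=\tilde h(x)\Bigl[-(x^2-2)^2+(x^4+4)\,x\,e^{x^2/2}\!\int_x^\infty e^{-y^2/2}\,dy\Bigr],
\]
the key being that the polynomial part collapses to $-(x^2-2)^2$. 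Since $0\le(x^4+4)\,x\,e^{x^2/2}\!\int_x^\infty e^{-y^2/2}\,dy\le x^4+4$, this bracket lies between $-(x^2-2)^2$ and $4x^2$, so after division by $(x^2+2)^3$ and insertion of $|\tilde h(x)|\le c|x|+c\sqrt{8/\pi}$ it becomes a bounded rational function of $x$; adding the trivial contribution $|x\tilde h'(x)/(x^2+2)|\le c/(2\sqrt2)$ and carrying out the optimisation yields $\|\chi'\|\le 7c$. In each of the four cases the stated numerical constant is precisely the outcome of the relevant elementary one‑variable maximisation.
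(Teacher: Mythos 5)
Your proof is correct, and it takes a genuinely different route from the paper's. The paper proceeds by the classical Stein device of rewriting $h(y)-\IE h(X)$ as an integral of $h'$ against $\Phi$ and $\bar\Phi$ (their equation \eqref{eq:38}), substituting this into the double integral defining $g_0$, interchanging integration orders, and bounding $g_0$ and $g_0'$ term by term; the bounds on $\|g\|,\|g'\|,\|\chi\|,\|\chi'\|$ then come out of a sequence of integral estimates using $\Upsilon(x)\le |x|+\sqrt{\pi/2}$ and $\Upsilon'(x)\le 1$. You instead exploit the ODE $g_0'(x)=x\,g_0(x)-x^2\tilde h(x)$ to reduce everything to closed algebraic expressions in $g_0$, $\tilde h$ and $\tilde h'$, and then bound $g_0$ by the decomposition $\tilde h(y)=\tilde h(x)+\bigl(\tilde h(y)-\tilde h(x)\bigr)$ inside the defining integral, controlling the remainder by the Lipschitz bound and the explicit Gaussian tail/moment identities. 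The essential difference shows up in the $\|\chi'\|$ estimate: the paper never writes out $\chi'$ in closed form, while your route makes the polynomial cancellation $(x^4+4)x^2-(x^6+x^4+4)=-(x^2-2)^2$ visible, which is precisely what kills the apparent $\mathrm{O}(x)$ growth of $x(x^4+4)g_0(x)/(x^2+2)^3$. I checked your closed forms for $g'$, $\chi$ and $\chi'$, the four tail identities (including $\int_x^\infty(y^7-5y^5-4y^3+4y)e^{-y^2/2}\,dy=(x^6+x^4+4)e^{-x^2/2}$, which matches $(x^2+2)(x^4-x^2+2)=x^6+x^4+4$), the remainder bound $e^{x^2/2}\int_x^\infty y^2(y-x)e^{-y^2/2}\,dy=2-x\,e^{x^2/2}\int_x^\infty e^{-y^2/2}\,dy\le2$, and the resulting one-variable optimisations; all come in comfortably under the stated constants $3c,4c,6c,7c$, and the reduction to $x>0$ via $h\mapsto h(-\,\cdot)$ is sound. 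Your approach is arguably more transparent about where the boundedness of $\chi'$ comes from, at the cost of being tied to the specific polynomial $b(x)=x^2$; the paper's rewriting via $h'$ is closer to the standard Stein toolkit and may generalize more systematically to other baseline functions $b$.
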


\begin{remark}
The function $g_0$ defined in \eqref{eq:35} satisfies 
\begin{equation}\label{eq:3}
  g_0'(x) - x g_0(x) = x^2\left( h(x) - \IE [Z^2h(Z)] \right)
\end{equation}
with $Z \sim \varphi$ a standard Gaussian random variable.
\end{remark}

\begin{remark}
The function $g$ defined in \eqref{eq:7}  satisfies
\begin{align*}
\frac{\left(  g(x) \tau(x) p(x)\right) '}{p(x)} = h(x) - \IE \left[ h(X) \right]
\end{align*}
with $X \sim p$ and $\tau(x) = 1+2/x^2$.   
\end{remark}

 % First note that if $X \sim p$ then
% $\IE[h(X)] = \IE[Z^2h(Z)]$ with $Z \sim \phi$ a standard Gaussian
% random variable.  
% Such a  function $g_0$ exists because
% \begin{equation*}
% \int_{-\infty}^{\infty} u^2\left( h(u) - \IE [Z^2h(Z)] \right)
% \varphi(u)\,du = \IE [Z^2h(Z)] - \int_{-\infty}^{\infty}u^2\varphi(u)\,du
% \IE [Z^2h(Z)]  = 0,
% \end{equation*}
% so \eqref{eq:3} does indeed possess a solution. 

\begin{proof} In order to simplify future notations we introduce
  $ \Phi(x) = \int_{-\infty}^x \varphi(t) \, dt,$
  $ \bar{\Phi}(x) = \int_x^{\infty} \varphi(t) \, dt$
  $ \Upsilon(x) = e^{x^2/2}\int_x^{\infty} t^2 e^{-t^2/2} \, dt$ and
  $\bar{\Upsilon}(x) = e^{x^2/2}\int_{-\infty}^x t^2 e^{-t^2/2} \,
  dt. $ Using the identity
\begin{align}\label{eq:8}
 \int_a^{b} t^2e^{-t^2/2}\,dt = a e^{-a^2/2}-b e^{-b^2/2}+
  \int_a^{b}e^{-t^2/2}\,dt, \quad -\infty \le a < b \le \infty,  
\end{align}
we deduce that
$ \Upsilon(x) = x + e^{x^2/2}\int_x^{\infty}e^{-t^2/2}\,dt$ and
$\bar{\Upsilon}(x) = -x + e^{x^2/2}\int_{-\infty}^xe^{-t^2/2}\,dt $
and thus
\begin{equation}
  \label{eq:9}
  \Upsilon(x),  \bar{\Upsilon}(x) \le |x| + \sqrt{\frac{\pi}{2}} \mbox{ at all } x \in
    \mathbb{R} \mbox{ and }\lim_{x\to
    \infty} \frac{\Upsilon(x)}{x} = \lim_{x\to
    -\infty} \frac{\bar{\Upsilon}(x)}{x}   = 1.
\end{equation}
 The proof is now broken down into several steps. 

\

\noindent \underline{Step 1:} rewrite the solutions. Following  \cite[page
39]{che2010} we rewrite the 
test functions in term of their derivatives (still with $Z$ a standard
normal random variable)
\begin{align*}
 & h(y) - E \left[ h(X) \right]  =    h(y) - E \left[ Z^2h(Z) \right]\\
 & = \int_{-\infty}^{\infty} z^2
                                     \left( h(y) - h(z)
                                     \right)\varphi(z) \, dz \\
  & = \int_{-\infty}^y z^2 \left( \int_z^y h'(t) \, dt \right) \varphi(z)
    \, dz - \int_y^{\infty} z^2 \left( \int_y^z h'(t) \, dt \right)
    \varphi(z) \, dz.
\end{align*}
Changing the order of integration then using \eqref{eq:8}  leads to
the rhs becoming 
\begin{align*}
  &  \int_{-\infty}^y h'(t) \left[ \int_{-\infty}^t
    z^2\varphi(z) \, dz \right] \, du
    - \int_y^{\infty} h'(t) \left[
    \int_t^{\infty} z^2 \varphi(z) \,
    dz\right] \, dt \\
  & =  \int_{-\infty}^y h'(t) \left[  -t \varphi(t) + \int_{-\infty}^t
    \varphi(z) dz \right] \, dt  - \int_y^{\infty} h'(t) \left[ t \varphi(t) + 
    \int_t^{\infty} \varphi(z) \,
    dz\right] \, dt \\
  & = -  \int_{-\infty}^\infty h'(t) t \varphi(t) \, dt +
    \int_{-\infty}^y h'(t) \Phi(t) \, dt -  \int_y^{\infty} h'(t)
    \bar{\Phi}(t) \, dt,
\end{align*}
and thus 
\begin{align}\label{eq:38}
  h(y) - E \left[h(X) \right] = \int_{-\infty}^yh'(t) \Phi(t) dt -
  \int_y^{\infty}h'(t) \bar{\Phi}(t))dt - E \left[ Z h'(Z) \right]. 
\end{align}
We deduce the following useful bound 
\begin{align}
  \label{eq:32}
  \frac{ h(x) - E \left[h(X) \right]}{x^2+2} \le c
  \frac{2(x+1/\sqrt{2\pi}) + \sqrt{2/\pi}}{x^2+2} \le 2c.
\end{align}
Plugging \eqref{eq:38} in \eqref{eq:35} leads to (we restrict the
discussion to $x>0$, the other case following by symmetry)
\begin{align*}
  g_0(x) & = - \IE[Zh'(Z)] \Upsilon(x) & =: I(x)\\
  &  \quad +  e^{x^2/2} \int_x^{\infty} \int_{-\infty}^y y^2
    e^{-y^2/2} h'(t) \Phi(t) \, dt dy  & =: II(x)  \\
  & \quad -   e^{x^2/2} \int_x^{\infty} \int_y^{{\infty}} y^2
    e^{-y^2/2} h'(t) \bar{\Phi}(t) \, dt dy & =: III(x) 
\end{align*}
To deal with  the quantities $II(x)$ and $III(x)$ we again interchange
integrations to get 
\begin{align*}
  II(x) & = e^{x^2/2} \int_{-\infty}^x \left( \int_x^{\infty} y^2e^{-y^2/2}
          \, dy \right) h'(t) \Phi(t) \, dt \\
& \quad + e^{x^2/2} \int_x^{\infty} \left( \int_t^{\infty} y^2e^{-y^2/2}
          \, dy \right)  h'(t) \Phi(t) \, dt \\
  & = \Upsilon(x) \int_{-\infty}^xh'(t) \Phi(t) \, dt + e^{x^2/2}
    \int_x^{\infty} e^{-t^2/2}  \Upsilon(t) h'(t) \Phi(t) \, dt 
\end{align*}
and 
\begin{align*}
  III(x) & = e^{x^2/2}  \int_x^{\infty} \left( \int_x^{t} y^2e^{-y^2/2}
          \, dy \right) h'(t) \bar{\Phi}(t) \, dt \\
  & = e^{x^2/2}  \int_x^{\infty} \left(  e^{-x^2/2}\Upsilon(x) - 
          e^{-t^2/2}\Upsilon(t)  \right) h'(t) \bar{\Phi}(t) \, dt \\
& = \Upsilon(x) \int_x^{\infty} h'(t) \bar{\Phi}(t) \, dt - e^{x^2/2}
  \int_x^{\infty} e^{-t^2/2} \Upsilon(t) h'(t)\bar{\Phi}(t) \, dt
\end{align*}
and thus if $x\ge 0$ we have
\begin{align}
  \label{eq:36}
  g_0(x) & = - \IE[Zh'(Z)] \Upsilon(x) + \Upsilon(x)
           \int_{-\infty}^xh'(t) \Phi(t) \, dt \nonumber\\
& \quad  -\Upsilon(x)
           \int_x^{\infty} h'(t) \bar{\Phi}(t) \, dt + e^{x^2/2}
  \int_x^{\infty} e^{-t^2/2} \Upsilon(t) h'(t) \, dt.
\end{align}
By a similar argument we deduce that if $x < 0$ then 
\begin{align}
\label{eq:31}
  g_0(x) & = - \IE[Zh'(Z)] \Upsilon(x) + \Upsilon(x)
           \int_{-\infty}^xh'(t)\bar{\Phi}(t)  \, dt \nonumber\\
& \quad  -\Upsilon(x)
           \int_x^{\infty} h'(t) \Phi(t)\, dt + e^{x^2/2}
  \int_{-\infty}^x e^{-t^2/2} \Upsilon(t) h'(t) \, dt.
\end{align}

\

\noindent \underline{Step 2:} a bound on $\|g\|$.  Supposing $\|h'\|\le c$ we can
use \eqref{eq:36} and the first claim in \eqref{eq:9} to deduce that
for $x \ge 0$: 
\begin{align*}
  \left| g_0(x) \right| & \le  cE \left| Z \right| (x + \sqrt{\pi/2}) +
                          c(x + \sqrt{\pi/2})\int_{-\infty}^x\Phi(t)\, dt \\
& \quad +
                          c(x + \sqrt{\pi/2})\int_x^{\infty} \bar{\Phi}(t)\,
                          dt + ce^{x^2/2}\int_x^{\infty}e^{-t^2/2}(t + \sqrt{\pi/2})\,dt.
\end{align*}
The last two terms decrease strictly to 0 as $x \to \infty$, with
maximum value $c/2$ and $c(1 + \pi/2)$, respectively.  The first term
is equal to $c (\sqrt{2/\pi}x + 1)$ and the second one  is equal to
\begin{align*}
   c(x + \sqrt{\pi/2})\int_{-\infty}^x\Phi(t)\, dt &  =  c(x +
                                                     \sqrt{\pi/2})
                                                     \left( x \Phi(x)
                                                     + \varphi(x)
                                                     \right) \\
& \le c(x^2+(\sqrt{\pi/2}+1/\sqrt{2\pi})x+1/2).
\end{align*}
 Similar (symmetric) bounds hold for $x \le 0$
and thus, collecting all these estimates, we may
conclude: 
\begin{equation}
  \label{eq:10}
  |g(x)|  = \frac{|g_0(x)|}{x^2+2} \le 3 c. 
\end{equation}

\

\noindent \underline{Step 3:}  a bound on $\|g'\|$.  Here we start by rewriting
the derivative as 
\begin{equation}
  \label{eq:11}
  g'(x) = \frac{g_0'(x)}{x^2+2} - \frac{2x}{(x^2+2)^2} g_0(x). 
\end{equation}
Using \eqref{eq:10}, the second summand is easily seen to be uniformly
bounded (by $3c$). We are left with the first summand for which we
start by rewriting the numerator, for $x \ge 0$, using \eqref{eq:36}:
\begin{align*}
  g_0'(x) & = - \Upsilon'(x) \IE[Zh'(Z)] + \Upsilon'(x)
            \int_{-\infty}^xh'(t){\Phi}(t)  \, dt  \\
& \quad - \Upsilon'(x) \int_x^{\infty} h'(t) \bar{\Phi}(t)\, dt \\
& \quad + \Upsilon(x) \left(h'(x) {\Phi}(x) + h'(x) \bar{\Phi}(x)
  \right) \\
  & \quad + x e^{x^2/2} \int_x^{\infty} \Upsilon(t) h'(t) e^{-t^2/2}
    \, dt - e^{x^2/2} \Upsilon(x) h'(x) e^{-x^2/2}
\end{align*}
which leads to 
\begin{align}
 g_0'(x) & =   - \Upsilon'(x) \IE[Zh'(Z)]\nonumber \\
  & \quad + \Upsilon'(x)\int_{-\infty}^xh'(t){\Phi}(t)  \, dt -
    \Upsilon'(x)\int_x^{\infty} h'(t) \bar{\Phi}(t)\, dt \nonumber\\
  \label{eq:40}
  & \quad + x e^{x^2/2} \int_x^{\infty} \Upsilon(t) h'(t) e^{-t^2/2}
    \, dt.
\end{align}
Now we can use the fact that 
$ \Upsilon'(x)  = x e^{x^2/2} \int_x^{\infty}e^{-t^2/2}\, dt \le 1
  \mbox{ for all }x \ge 0$
as
well as all the arguments outlined at the previous step to
deduce the bound: 
$ | g_0'(x) |  \le c \left(  \sqrt{\frac{2}{\pi}} +2(x+1/\sqrt{2\pi}) +
               \frac{1}{\sqrt{2\pi}}\right)\le 2c (x+ 1)$
whence 
\begin{align}
  \label{eq:22}
  \frac{ | g_0'(x) |}{x^2+2} \le  c \frac{2x+2}{x^2+2} \le c.
\end{align}
Similar (symmetric) arguments hold also for negative $x$ and thus
$|  g'(x) | \le 4c. $

\

\noindent \underline{Step 4:} a bound on  $\chi(x) = g'(x)/x$. Using \eqref{eq:11}
we know that 
\begin{align}
  \label{eq:2}
  \chi(x) & = \frac{g_0'(x)}{x(x^2+2)} - \frac{1}{(x^2+2)^2} g_0(x).
\end{align}
The second summand in \eqref{eq:2} is bounded using \eqref{eq:31} to
get 
\begin{align}
  \label{eq:12}
  \frac{1}{(x^2+2)^2} \left| g_0(x) \right| \le 3c.
\end{align}
For the first summand we use \eqref{eq:40} to deduce 
\begin{align*}
   \frac{g_0'(x)}{x} & =   - \frac{\Upsilon'(x)}{x} \IE[Zh'(Z)]\nonumber \\
  & \quad + \frac{\Upsilon'(x)}{x}\int_{-\infty}^xh'(t){\Phi}(t)  \, dt -
    \frac{\Upsilon'(x)}{x}\int_x^{\infty} h'(t) \bar{\Phi}(t)\, dt \nonumber\\
  & \quad +  e^{x^2/2} \int_x^{\infty} \Upsilon(t) h'(t) e^{-t^2/2}
    \, dt.
\end{align*}
At this stage it is useful to remark that, for $x\ge0$, the function $
{\Upsilon'(x)}/{x}$ is strictly decreasing with maximal value
$\sqrt{\pi/2}$ and hence 
$  \left|  \frac{g_0'(x)}{x} \right|  \le  c \left( 1 +  2\left( x +
                                      \frac{1}{\sqrt{2\pi}} \right) +
                                      \frac{1}{\sqrt{2\pi}}\right)  \le c \left( 2x+ 3 \right)$
and thus
$   \left|  \frac{g_0'(x)}{x(x^2+2)} \right| \le 3c$
which, combined with \eqref{eq:12}, leads (after applying the
symmetric arguments for $x \le 0$) to
$  \left| \chi(x) \right| \le 6c.$

\

\noindent \underline{Step 5:} a bound on  $\|\chi'\|$. Direct
computations using \eqref{eq:3} 
\begin{align*}
  \chi(x) = \frac{1}{x^2+2} \left( 1-\frac{1}{x^2+2} \right) g_0(x) -
  \frac{x}{x^2+2}\left( h(x) -\IE [Z^2h(Z)]  \right)  
\end{align*}
and thus  
\begin{align*}
  \chi'(x)  & = -\frac{2x^3}{(x^2+2)^2}
  \frac{g_0(x)}{x^2+2} + \left( 1 -\frac{2}{x^2+2}
  \right)\frac{g_0'(x)}{x^2+2}   \\
& \qquad- \frac{2-x^2}{x^2+2} \frac{ h(x) -\IE [Z^2h(Z)]}{x^2+2}  -
  \frac{x}{x^2+2} h'(x). 
\end{align*}
Using the bounds $ \left|{2x^3}/{(x^2+2)^2}\right|\le 1,$
$ \left| 1 -{2}/({x^2+2}) \right| \le 1,$
$ \left|({2-x^2})/({x^2+2})\right| \le 1$ and
$ \left|{x}/({x^2+2})\right| \le 1$ as well as \eqref{eq:10},
\eqref{eq:22} and \eqref{eq:32} we conclude (after applying the
symmetric arguments for $x \le 0$) $ \left| \chi'(x) \right| \le 7c.$

 \end{proof}

\subsection{Verifying bounds on expectations}

In this section we find bounds on the expectations in Theorem \ref{maintheorem} in order to prove Corollary \ref{maincorollary}.   We will make use of the following lemma.

\begin{lemma}\label{lem2} If $x_1,\ldots, x_N$ is the unique strictly decreasing zero-mean solution  of  (\ref{rec-max0}), then $x_1=O(\sqrt{\log N})$.

\end{lemma}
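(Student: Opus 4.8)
\textbf{Proof plan for Lemma~\ref{lem2}.}
The plan is to mimic the corresponding estimate in \cite{McKeague2015} for the Gaussian recursion \eqref{e1}, adapting it to the cubic recursion \eqref{rec-max0}. The key structural fact is that, writing $S_n = \sum_{i=1}^n x_i^{-1}$, the recursion reads $x_{n+1}^3 = x_n^3 - 3/S_n$, so the cubes $y_n := x_n^3$ decrease in steps of size $3/S_n$. Since the $x_i$ are strictly decreasing and symmetric with $x_1 > 0$, for $n$ up to roughly $N/2$ the partial sums $S_n$ are positive and of order $n/x_1$ (the terms $1/x_i$ near the top of the list being the smallest), so $y_1 - y_n = 3\sum_{i=1}^{n-1} 1/S_i$ is, heuristically, of order $x_1 \log n$. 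On the other hand $y_1 = x_1^3$, and matching $x_1^3 \asymp x_1 \log N$ at $n \asymp N/2$ forces $x_1^2 = O(\log N)$, i.e.\ $x_1 = O(\sqrt{\log N})$. The work is in turning this heuristic into rigorous one-sided inequalities.

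First I would record the elementary monotonicity/positivity facts inherited from Lemma~\ref{lem1}: the solution is strictly decreasing, symmetric ($x_n = -x_{N+1-n}$), has $x_1^2 + \dots + x_N^2 = 3(N-1)$, and $S_N = 0$ with $S_n > 0$ for $1 \le n \le N-1$ (positivity of the intermediate partial sums follows from symmetry together with $1/x_n$ being decreasing in magnitude as $n$ increases toward the center, so the positive head dominates until the sum closes up at $n=N$). From \eqref{rec-max0}, summing gives $x_1^3 = x_{m}^3 + 3\sum_{n=1}^{m-1} 1/S_n$ for any $m$. Taking $m$ to be the central index (say $m = N/2$, using $N$ even) and using $x_{N/2} \ge 0$ yields the clean bound
\begin{equation*}
  x_1^3 \ge 3 \sum_{n=1}^{N/2-1} \frac{1}{S_n}.
\end{equation*}
This is the wrong direction for what we want, so instead I would extract an \emph{upper} bound on $x_1$: since $x_1$ is the largest value and $1/x_i \le 1/x_n$ for $i \le n$, we get $S_n \le n/x_n \le n/x_{n}$; but more usefully $S_n \le n/x_{\lceil n/2\rceil}$ type bounds, or simply $S_n \le n/x_n$. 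Combined with $x_n^3 = x_1^3 - 3\sum_{j<n}1/S_j \le x_1^3$, one controls how fast the cubes drop.

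The cleaner route, and the one I would actually pursue, is a variance/counting argument. The point is that $x_1$ large forces the top few $x_n$ to all be large (because the recursion only decreases the cubes by a controlled amount per step when $S_n$ is not too small), and too many large values contradicts the fixed variance $3(N-1)$. Concretely, I would show there is an absolute constant $\kappa$ so that $x_n^3 \ge x_1^3 - \kappa\, x_1 \log n$ for $n$ in a suitable range — using $S_j \ge c\, j / x_1$ (valid because the first $j$ reciprocals $1/x_i$ are each $\ge 1/x_1$, so $S_j \ge j/x_1$, giving the clean bound $S_j \ge j/x_1$ with $c=1$) so that $\sum_{j=1}^{n-1} 3/S_j \le 3 x_1 \sum_{j=1}^{n-1} 1/j \le 3 x_1(1+\log n)$. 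Hence $x_n^3 \ge x_1^3 - 3x_1(1+\log n)$. Now if $x_1^2 > 12\log N$ say, then for all $n \le N/2$ we have $3x_1(1+\log n) \le 3x_1(1+\log N) \le \tfrac12 x_1^3$ once $x_1^2 \ge 6(1+\log N)$, so $x_n^3 \ge \tfrac12 x_1^3$, i.e.\ $x_n \ge 2^{-1/3} x_1$ for all $n \le N/2$. Then
\begin{equation*}
  3(N-1) = \sum_{n=1}^N x_n^2 \ge \sum_{n=1}^{N/2} x_n^2 \ge \frac{N}{2}\cdot 2^{-2/3} x_1^2,
\end{equation*}
forcing $x_1^2 \le 6 \cdot 2^{2/3}$, a bounded quantity — contradicting $x_1^2 > 6(1+\log N)$ for $N$ large. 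Therefore $x_1^2 = O(\log N)$, which is the claim.

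\textbf{Main obstacle.} The delicate point is the lower bound $S_j \ge c\,j/x_1$ used to control $\sum 3/S_j$ from above: the crude bound $S_j \ge j/x_1$ is in fact immediate since each $1/x_i \ge 1/x_1$ for $i \le j \le N/2$ (all these $x_i$ are positive and $\le x_1$), so this is not actually an obstacle — the real care is only needed in checking that the relevant partial sums $S_j$ stay positive and that the index ranges (``$n \le N/2$'') are handled correctly for $N$ even, which is exactly the setting of Lemma~\ref{lem1}. Thus the proof is a short deterministic argument combining the telescoped recursion, the trivial lower bound on partial sums, and the fixed Maxwell variance (P2).
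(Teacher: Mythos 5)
Your argument is essentially the paper's: both telescope $x_1^3 - x_m^3 = 3\sum_{n<m} S_n^{-1}$, use $S_n \ge n/x_1$ (each $1/x_i \ge 1/x_1$ for $i \le n \le N/2$), bound the harmonic sum by $1+\log m$, and invoke the Maxwell variance (P2) to close. The paper takes $m$ near the median, divides by $x_1$, and uses (P2) to bound $x_m$ from above and $x_1$ away from zero; your contradiction/counting variant (if $x_1^2 \gtrsim \log N$ then half the $x_n^2$ exceed a constant times $x_1^2$, contradicting $\sum x_n^2 = 3(N-1)$) is a minor and equally valid repackaging of the same final step.
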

\begin{proof} To simplify the notation, note that it suffices to consider the rescaled recursion $x_{n+1}^3 = x_n^3 -S_n^{-1}$, where $S_n$ is defined in the proof of Lemma \ref{lem1}.  By expressing $x_1^3$ as a  telescoping sum,
\begin{equation*}
 x_1^3= \sum_{n=1}^{m-1}(x_n^3-x_{n+1}^3) +x_m^3= \sum_{n=1}^{m-1}S_n^{-1} +x_m^3
 \le
\sum_{n=1}^{m-1}(n/x_1)^{-1} +x_m^3 \le  x_1(1+\log m) +x_m^3,
\end{equation*}
where we have used Euler's approximation to the harmonic sum for the last inequality.  By the variance property (P2) (in this  rescaled  case $x_1^2+\ldots +x_N^2=N-1$) we have that  $x_1$ is  bounded away from zero (as a sequence indexed by $N$) and $x_m$ is  bounded, so $x_m^2/x_1$ is bounded.   Dividing the above display by $x_1$, we then obtain $x_1=O(\sqrt{\log N})$.\end{proof}

%We now prove Corollary \ref{maincorollary}.
\begin{proof}[{Proof of Corollary \ref{maincorollary}}]
 From Proposition \ref{prop:gener-zero-bias} and the recursion (\ref{rec-max0}), note that $p^{\star}(x)$ puts  mass $1/(N-1)$ on each interval between successive $x_n$, so it is easy to create a coupling of $W \sim \mathbb{P}_N$ with $W^{\star}\sim p^{\star}(x)$ 
such that  $$|W- W^{\star}|\le  |x_{n}-x_{n+1}|$$ when $W\in [x_{n+1},x_n]$. For a detailed proof of such a coupling, see the construction given in \cite{McKeague2015}.  From Lemma \ref{lem2} we then have 
\begin{equation}
\label{b1} \IE |W-W^{\star}|\le {1\over N-1} \sum_{n=1}^{N-1}(x_{n}-x_{n+1})=  {2x_{1}\over N-1}= O\left(\frac{\sqrt{\log N}}{N}\right).
\end{equation}
Second, using $|W|\le x_1=O(\sqrt{\log N})$ it follows immediately that
\begin{equation}
\nonumber \IE [|W||W-W^{\star}|] = O\left(\frac{{\log N}}{N}\right).
\end{equation}

Third,  the zero-median property gives $$2x_m^3=x_m^3-x_{m+1}^3 = {S_m^{-1}}\ge (m/x_m)^{-1} ={x_m}/{m},$$
where  $m=N/2+1$, so $ x_m\ge  1/\sqrt {N}$.
By  symmetry
$$ \IE\left| \frac{1}{W}-\frac{1}{W^{\star}}\right|= \IE \left|\frac{1}{W}-\frac{1}{W^{\star}}\right| 1_{W^{\star}\in (x_{m+1}, x_m]} + 2\sum_{n=1}^{m-1}  \IE \left|\frac{1}{W}-\frac{1}{W^{\star}}\right| 1_{W^{\star}\in (x_{n+1}, x_n]}. $$
  From Proposition \ref{prop:gener-zero-bias} note that $p^{\star}(x) \propto x^2$  for $x\in (x_{m+1}, x_m]$.  Also using the fact  that $p^{\star}(x)$ puts  mass $1/(N-1)$ on this  interval,  the first term above can be written
$$\frac{6}{x_m^3(N-1)}\int_0^{x_m} \left({1\over x}-{1\over x_m}\right)x^2\, dx\le \frac{3}{x_m(N-1)}=O\left({\frac{1}{\sqrt N}}\right).$$
The second term is bounded above by the telescoping sum
$${2\over N-1}\sum_{n=1}^{m-1}\left({1\over x_{n+1}}-{1\over x_n}\right)={2\over N-1} \left({1\over x_{m}}-{1\over x_1}\right)=O\left({\frac{1}{\sqrt N}}\right),$$
so we have
$$ \IE\left| \frac{1}{W}-\frac{1}{W^{\star}}\right|=O\left({\frac{1}{\sqrt N}}\right).$$

Fourth,
\begin{equation}
\nonumber \IE \left|1-\frac{W^{\star}}{W}\right| \le \sqrt {N} \, \IE |W-W^{\star}| = O\left(\sqrt{\frac{\log N}{N}}\right)
\end{equation}
 using $ |W|\ge x_m\ge  1/\sqrt {N}$ and   (\ref{b1}). The Corollary now follows  from Theorem \ref{maintheorem}.
\end{proof}

\section*{Acknowledgements}
The research of Ian McKeague was partially supported by NSF Grant
DMS-1307838 and NIH Grant 2R01GM095722-05. {The research of Yvik Swan
was partially by the Fonds de la Recherche Scientifique - FNRS under
Grant no F.4539.16} as well as IAP Research Network P7/06 of the
Belgian State (Belgian Science Policy).  We also thank the Institute
for Mathematical Sciences at National University of Singapore for
support during the {\it Workshop on New Directions in Stein's Method}
(May 18--29, 2015) where work on the paper was initiated.


\begin{thebibliography}{60}
\providecommand{\natexlab}[1]{#1}
\providecommand{\url}[1]{\texttt{#1}}
\expandafter\ifx\csname urlstyle\endcsname\relax
  \providecommand{\doi}[1]{doi: #1}\else
  \providecommand{\doi}{doi: \begingroup \urlstyle{rm}\Url}\fi


\bibitem[Bohm(1952)]{Bohm} 
D. Bohm (1952).   \newblock A suggested interpretation of the quantum theory in terms of ``hidden" variables. I.
\newblock \emph{Phys. Rev.} \textbf{85}, 166--179.



\bibitem[Cacoullos and Papathanasiou(1989)]{Cacoullos1989}
T.~Cacoullos and V. Papathanasiou (1989).
\newblock Characterizations of
distributions by variance bounds.
\newblock \emph{Statist. Probab. Lett.} \textbf{7},  \penalty0 351--356.


\bibitem[Chatterjee(2009)]{C09}
S.~Chatterjee (2009).
\newblock Fluctuations of eigenvalues and second order
Poincar\'e© inequalities.
\newblock  \emph{Probab. Theory Related Fields} \textbf{143} 1--40.






\bibitem[Chatterjee and Shao(2011)]{Chatterjee2011a}
S.~Chatterjee and Q.-M. Shao (2011).
\newblock Non-normal approximation by {S}tein's method of exchangeable pairs
  with application to the {C}urie--{W}eiss model.
\newblock \emph{Ann. App. Probab.} \textbf{21}, \penalty0 464--483.


\bibitem[{Chen et~al.(2010)Chen, Goldstein and Shao}]{che2010}
L. Chen, L. Goldstein and  Q.-M. Shao (2010). \emph{Normal Approximation by Stein's
  Method}. Springer Verlag.


\bibitem[D\"obler(2015)]{Dobler12}
C. D\"obler (2015).
\newblock {Stein's method of exchangeable pairs for the beta
  distribution and generalizations}.
\newblock \emph{Electron. J.  Probab.}  \textbf{20}, \penalty0 1--34.



\bibitem[Goldstein and Reinert(1997)]{Goldstein1997}
L.~Goldstein and G.~Reinert (1997).
\newblock Stein's method and the zero bias transformation with application to
  simple random sampling.
\newblock \emph{Ann. Appl. Probab.} \textbf{7}, \penalty0 935--952.

\bibitem[{Hall et~al.(2014)Hall, Deckert and Wiseman}]{Hal2014}
M. J.~W. Hall, D. A. Deckert and H.~M.   Wiseman (2014).
\newblock Quantum phenomena modeled
  by interactions between many classical worlds.
\newblock \emph{Phys.\ Rev. X}  \textbf{4}, 041013.

\bibitem[Ley, Reinert and Swan(2016)]{LRS16}
C. Ley, G. Reinert and Y. Swan (2016).
\newblock Stein's method for comparison of univariate distributions.
\newblock http://arxiv.org/abs/1408.2998


\bibitem[McKeague and Levin(2016)]{McKeague2015}
 I. W. McKeague and B. Levin (2016).
\newblock Convergence of  empirical distributions  in an interpretation of  quantum mechanics.
\newblock %Preprint at  \url{http://arxiv.org/abs/1412.1563}.
\emph{Ann.\ Appl.\ Probab.}\  \textbf{26} 2540--2555.

\bibitem[Nourdin and Peccati(2009)]{NP09}
I.~Nourdin  and G. Peccati (2009).
\newblock Stein's method on {W}iener
chaos.
\newblock  \emph{Probab. Theory Related Fields} \textbf{145} 75--118.


\bibitem[Nourdin and Peccati(2012)]{NP12}
I.~Nourdin  and G. Peccati (2012).
\newblock \emph{Normal approximations with Malliavin calculus: from Stein's
method to universality.}
\newblock  Vol. 192. Cambridge University Press.



\bibitem[Pek{\"o}z and R{\"o}llin(2011)]{Pekoz2011}
E.~Pek{\"o}z and A.~R{\"o}llin (2011).
\newblock New rates for exponential approximation and the theorems of
  {R}{\'e}nyi and {Y}aglom.
\newblock \emph{Ann. Probab.} \textbf{39}, \penalty0 587--608.

\bibitem[Pek{\"o}z et~al.(2013)Pek{\"o}z, R{\"o}llin and
  Ross]{Pekoz2013}
E.~Pek{\"o}z, A.~R{\"o}llin and N.~Ross (2013).
\newblock Degree asymptotics with rates for preferential attachment random
  graphs.
\newblock \emph{Ann. Appl. Probab.} \textbf{23}, \penalty0 1188--1218.

\bibitem[Pek{\"o}z et~al.(2016)Pek{\"o}z, R{\"o}llin and
  Ross]{Pekoz2015}
E.~Pek{\"o}z, A.~R{\"o}llin and N.~Ross (2016).
\newblock Generalized gamma approximation with rates for urns, walks and trees.
\newblock  \emph{Ann.\ Probab.}, Vol. 44, No. 3, pp. 1776-1816.


\bibitem[Ross(2011)]{Ross2011}
N.~Ross (2011).
\newblock Fundamentals of {S}tein's method.
\newblock \emph{Probab. Surv.} \textbf{8}, \penalty0 210--293.
%\newblock \doi{10.1214/11-PS182}.

%\bibitem[Ross and Pek\"{o}z(2007)]{Ross2007}
%S. Ross and E. Pek\"{o}z (2007).
% \emph{A {S}econd {C}ourse in {P}robability}.
%www.{P}robability{B}ookstore.com, Boston, MA.

\bibitem[Stein(1986)]{Stein1986}
C.~Stein (1986).
\newblock \emph{Approximate {C}omputation of {E}xpectations}.
\newblock Institute of Mathematical Statistics Lecture Notes--Monograph
  Series, 7. Institute of Mathematical Statistics, Hayward, CA.



\end{thebibliography}
\end{document}